\numberwithin{equation}{section}
\theoremstyle{plain}
\newtheorem{Theorem}{Theorem}[section]
\newtheorem{Remark}{Remark}[section]
\newtheorem{Lemma}{Lemma}[section]
\newtheorem{Proposition}{Proposition}[section]
\newtheorem{Example}{Example}[section]
\newtheorem{Corollary}{Corollary}[section]
\newcommand{\R}{\mathbb{R}}
\newcommand{\bx}{\mathbf x}
\newcommand{\N}{\mathcal{N}}
\newcommand{\PD}[1]{\rm{S}_{++}(#1)}
\newcommand{\match}[1]{\overset{{#1}}{=}}
  \newcommand{\tr}{\textrm{tr}}                      
\newcommand{\rk}{\textrm{rank}}
\newcommand{\G}{\mathcal{G}}
\newcommand{\vb}{\mathbf{v}}
\def\u{\mathbf{u}}
\def\w{\mathbf{w}}
\def\x{\mathbf{x}}
\def\y{\mathbf{y}}
\def\ra{\mathrm{r}_{w}}
\def\rb{\mathrm{r}}
\newcommand{\Hc}{\mathcal{H}}
\def\Sc{\mathcal{S}}
\title{Sharper lower and upper bounds for the Gaussian rank of a graph}
\author{Emanuel Ben-David\thanks{ehb2126@columbia.edu}}
\affil{Department of Statistics, Columbia University}
\date{}
\begin{document}
\maketitle

\begin{abstract}

An open problem in graphical Gaussian models is to determine the smallest number of observations needed to guarantee the existence of the maximum likelihood estimator of the covariance matrix with probability one. In this paper we formalize a closely related problem in which the existence of the maximum likelihood estimator is guaranteed for all generic observations. We call the number determined by this problem the Gaussian rank of the graph representing the model. We prove that the Gaussian rank is strictly between the subgraph connectivity number and the graph degeneracy number. These bounds are in general much sharper than the best bounds known in the literature and furthermore computable in polynomial time.
\end{abstract}


\section{ Introduction}\label{sec:intro}
An open problem in graphical Gaussian models is to determine the smallest number of observations needed to guarantee the existence of the maximum likelihood estimator (MLE) of the covariance matrix. This problem first arose in the Dempster's paper \cite{D72} and has been frequently brought to attention by Steffen Laurtizen, as evident in \cite{Buhl93} and Lauritzen's lectures in ``Durham Symposium on Mathematical Aspects of Graphical Models 2008." Hence we refer to this problem as the D\&L problem using the initials of Dempster and Lauritzen. To well pose this problem it is necessary to exclude some observations from a set of at least zero-probability. For this purpose, we can formalize the D\&L problem in its most general form as follows. \\

\noindent \textbf{D\&L problem (I)} For a given graphical Gaussian model with respect to a graph $\G$, determine the smallest number of observations needed to guarantee the existence of the MLE of the covariance matrix with probability one.\\

A slightly different well-posed formalization of the D\&L problem is to require the existence of the MLE (of the covariance matrix, henceforth) for all generic observations in the following sense. The observations\footnote{Throughout the paper, we always assume that the observations are independent and identically distributed.} $\x_1,\ldots, \x_n$  are said to be generic if each $n\times n$ principal submatrix of the sample covariance matrix $\mathcal{S}=1/n\sum_{i=1}^n\x_i\x_i^{\top}$ is non-singular. Note that with probability one every $n$ observations are generic (thus the set of non-generic observations has zero probability). The problem is now formalized as follows.\\

\noindent \textbf{D\&L problem (II)} For a given graphical Gaussian model with respect to a graph $\G$, determine the smallest number of observations needed to guarantee the existence of the MLE for all generic observations.\\

The number determined by the D\&L problem (II) is said to be the Gaussian rank of $\G$, denoted by $\rb(\G)$. The primary goal of this paper is to obtain sharp lower and upper bounds on $\rb(\G)$. In parallel to the D\&L problem (II), the number determined by the D\&L problem (I) is said to be the weak Gaussian rank of $\G$, denoted by $\ra(\G)$. Note again that being in general position is a generic property of any $n$ observations, that is, with probability one for every $n$ observations each $n\times n$ principal submatrix of the sample covariance matrix is in non-singular. This obviously implies that $\rb(\G)$ is an upper bound for $\ra(\G)$.  

As we mentioned earlier, the D\&L problem first arose in \cite{D72}, a celebrated work of Dempster in which, under the principle of parsimony in fitting parametric models, Dempster introduced graphical Gaussian models and gave a partial analytic expression for the MLE of the covariance matrix, assuming that the MLE exists. In practice, the MLE has to be computed by an iterative numerical procedure, such as the iterative proportional scaling procedure (IPS) \cite{S86}. However, if the number of observations is not sufficiently large there is no guarantee that the output matrix will indeed correspond to the MLE, as it might not be positive definite.\\
Both formalizations (I) and (II) of the D\&L problem are important because, in some sense, their answers give us a better understanding of sparse graphs, even when the main concern is not the MLE. Moreover, knowing exactly, or closely enough, how many observations are needed to estimate the covariance matrix is useful in applications of graphical Gaussian models in situations where the sample size calculation is a very important aspect of the study, for example in genetics and medical imaging.

In a graphical Gaussian model the sparsity is given by a pattern of zeros in the inverse covariance matrix. The pattern of zeros is determined by the missing edges of a graph and each zero entry of a generic inverse covariance matrix indicates that the corresponding pair of variables are conditionally independent given the rest of variables.  An attractive feature of graphical Gaussian models, or graphical models in general, is that if the graph representing the model is sparse, then the MLE of the covariance matrix can exist even if the number of observations is much smaller than the dimension of the multivariate Gaussian distribution.  Intuitively, we expect that the guassian rank of the graph, the number determined by the D\&L problem, to decrease as the graph becomes sparser. However, it is not clear what best measures the sparsity of a graph or how the Gaussian rank varies accordingly. The main theorem of this paper suggests two such measures of sparsity.\\

Despite some efforts since 90's not much progress has been made to resolve the D\&L problem. The existing results are limited to a handful of publications as follows. 
\begin{itemize}
\item[(R1)] \cite{G84}  For a decomposable graph $\G$, a graph that has no induced cycle of length larger than or equal to four, the Gaussian rank is equal to $\omega(\G)$, the size of the largest complete subgraph of $\G$.
\item[(R2)] \cite{G84} For every graph $\G$, $\omega(\G)\le \ra(\G)=\rb(\G)\le \textrm{tw}(\G)+1$, where $\textrm{tw}(\G)$ denotes the treewidth of $\G$ (please see Appendix A for the definition). 
\item[(R3)] \cite{Buhl93} For $C_p$, a cycle of length $p\ge 3$, $2=\omega(C_p)<\ra(C_p)=\rb(C_p)=3=\rm{tw}(C_p)+1$.
\item[(R4)] \cite{Uh12} For $G_{3,3}$, the $3\times 3$ grid, $2=\omega(G_{3,3})<\ra(G_{3,3})=\rb(G_{3,3})=3<\rm{tw}(G_{3,3})+1=4$.
\end{itemize}
\noindent In the literature the bounds given by (R2) are currently the best known bounds for the guassian rank. Restricted to the class of decomposable graphs these bounds are tight, since $\textrm{tw}(\G)+1=\omega(\G)$ for a decomposable graph $\G$, but we may note that (R4) in \cite{Uh12} shows that for non-decomposable graphs the bounds in (R2) are not necessarily tight. Intuitively, it is apparent that $\omega(\G)$ overestimates and $\textrm{tw}(\G)$ underestimates the sparsity of $\G$ and therefore sharper bounds may exist. In fact in this paper we give much sharper bounds on the Gaussian rank. The lower and upper bounds we give are the subgraph connectivity number, denoted by $\kappa^{*}(\G)$, and the graph degeneracy number, denoted by $\delta^{*}(\G)$. Both these bounds are well-known in graph theory. Formally we prove the following theorem.\\
\begin{Theorem}\label{thm:main} Let $\G=(V, E)$ be a graph. Then
\begin{equation}\label{eq:main}
\kappa^{*}(\G)+1\le \rb(\G)\le \delta^{*}(\G)+1
\end{equation}
\end{Theorem}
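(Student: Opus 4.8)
The plan is to translate everything into matrix‑completion language and then treat the two inequalities separately. The bridge is the classical fact (Dempster; Lauritzen; Grone–Johnson–Sá–Wolkowicz) that the MLE of $\Sigma$ exists for data $S$ exactly when the $\G$-partial matrix $(S_{ij})_{i=j\text{ or }ij\in E}$ has a positive definite completion, together with its convex‑duality reformulation: \emph{no} such completion exists iff there is a nonzero positive semidefinite $Y$ whose support lies in the diagonal together with $E$ (i.e.\ $Y$ has the zero pattern of a $\G$-concentration matrix) with $\langle Y,S\rangle\le 0$; since $S\succeq 0$ this forces $\langle Y,S\rangle=0$, hence $YS=0$, i.e.\ the column space of $Y$ lies in $\ker S$. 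I will also use the \emph{marginalization} observation: if $H$ is an induced subgraph of $\G$ and the $\G$-MLE exists for $S$, then (restricting $\hat\Sigma$ to $V(H)$) the $H$-MLE exists for $S_{V(H),V(H)}$, so $\rb(\G[W])\le\rb(\G)$ for all $W\subseteq V$. Finally, generic rank-$r$ data on a vertex subset extends to generic rank-$r$ data on all of $V$ (append vectors in general position in $\R^{r}$), and a rank-$r$ PSD matrix $S$ is generic iff every nonzero vector of $\ker S$ has more than $r$ nonzero coordinates.

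For the upper bound $\rb(\G)\le\delta^{*}(\G)+1$ I will induct on $|V|$, proving: for every $n\ge\delta^{*}(\G)+1$ and every generic rank-$n$ data $S$ on $V$ (read as $S\succ 0$ when $n\ge|V|$) the $\G$-partial matrix has a PD completion. The case $|V|\le n$ is immediate, so assume $|V|>n$ and pick $v$ with $\deg_\G(v)\le\delta^{*}(\G)$ (possible since $\delta(\G)\le\delta^{*}(\G)$); set $\G'=\G-v$, $N=N_\G(v)$, $S'=S_{V',V'}$. Here $S'$ is again generic of rank $n$ and $\delta^{*}(\G')\le\delta^{*}(\G)$, so by the inductive hypothesis the $\G'$-MLE exists for $S'$; by duality there is then no nonzero PSD matrix with the $\G'$-pattern whose column space lies in $\ker S'$, and a separation argument produces a symmetric $Z$ with $Z_{ij}=0$ whenever $i=j$ or $ij\in E(\G')$ and $u^{\top}Zu>0$ for all $0\ne u\in\ker S'$. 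Hence $A':=S'+tZ$ is a PD completion of the $\G'$-partial matrix for all small $t>0$, with $A'_{NN}\to S_{NN}$ as $t\to 0$. Now extend $A'$ by a row/column indexed by $v$: one needs $b\in\R^{V'}$ with $b_u=S_{vu}$ for $u\in N$ (the remaining coordinates free) and $S_{vv}-b^{\top}(A')^{-1}b>0$. Minimizing $b^{\top}(A')^{-1}b$ over the free coordinates and using the block‑inverse identity $((A')^{-1}/(A')^{-1}_{FF})=(A'_{NN})^{-1}$ (with $F=V'\setminus N\ne\emptyset$), the minimum equals $\beta^{\top}(A'_{NN})^{-1}\beta$ with $\beta=(S_{vu})_{u\in N}$; since $|N\cup\{v\}|\le n$, genericity gives $\beta^{\top}(S_{NN})^{-1}\beta<S_{vv}$, hence $\beta^{\top}(A'_{NN})^{-1}\beta<S_{vv}$ for $t$ small, and the extended matrix is PD. This closes the induction.

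For the lower bound $\rb(\G)\ge\kappa^{*}(\G)+1$ write $c=\kappa^{*}(\G)$ and fix an induced subgraph $H$ with $\kappa(H)=c$. By marginalization and the extension‑to‑genericity remark it suffices to produce generic rank-$c$ data on $V(H)$ for which the $H$-MLE fails. If $H=K_{c+1}$ this is trivial: a generic rank-$c$ $S$ is the Gram matrix of $c+1$ vectors in general position in $\R^{c}$, the $H$-partial matrix is all of $S$, and $\mathrm{rank}\,S=c<c+1$, so $S$ is not PD. Otherwise take a minimum vertex cut $A$ of $H$, $|A|=c$, and split $V(H)\setminus A$ into nonempty $B_1,B_2$ with no edge between them (lump the components of $H-A$); minimality forces each $a\in A$ to have a neighbor in $B_1$ and in $B_2$. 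I will place the rows $r_a\ (a\in A)$ as a fixed basis of $\R^{c}$ and choose the remaining rows so that the $B_1$- and $B_2$-blocks are "twisted" relative to this basis, yielding a nonzero PSD matrix $Y$ supported on the diagonal and $E(H)$ (so vanishing on all $B_1$–$B_2$ pairs) with $YS=0$ — generalizing Buhl's winding example for the cycle ($c=2$) and, in the clean model $H=K_{c,c}$, amounting to taking the $B$-rows to be the columns of a generic orthogonal matrix $W$ and $Y=\bigl(\begin{smallmatrix}I&-W\\-W^{\top}&I\end{smallmatrix}\bigr)$, for which $YS=0$ and $\ker S$ contains no short support vector. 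A last perturbation of the free parameters restores genericity of $S$ while preserving the (once realized) failure of completability; extending $S$ generically to $V$ then gives $\rb(\G)>c$.

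I expect the real difficulty to be exactly this construction: for an \emph{arbitrary} $c$-connected $H$, building the obstruction $Y$ together with a rank-$c$ realization $S$ that is at once \emph{boundary} ($YS=0$, $Y$ supported on the diagonal and $E(H)$) and \emph{generic} ($\ker S$ free of short support vectors) — the two demands pull against each other, and making them compatible for every cut structure (not just the complete‑bipartite model) is where the hypothesis $\kappa(H)=c$ must be exploited in full. The upper bound is by comparison routine once one notices that the inductive hypothesis furnishes precisely the perturbation direction $Z$ needed to find a PD completion on $\G-v$ with prescribed $N_\G(v)$-block.
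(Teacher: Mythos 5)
Your upper bound argument appears sound and takes a genuinely different route from the paper's. The paper proves a key lemma (if $\rb(\G-v)\ge \deg_{\G}(v)+1$ then $\rb(\G)=\rb(\G-v)$) by an explicit rank-one modification $B\mapsto B-\w\w^{\top}$ with $\w=B\x$ chosen so that general position is preserved and the entries on $\mathrm{ne}(v)$ are pinned down; you instead use conic duality plus a separating hyperplane to manufacture a one-parameter family $S'+tZ$ of positive definite completions on $\G-v$ converging to $S'$, and then extend to $v$ by Schur-complement continuity, the inequality $\beta^{\top}S_{NN}^{-1}\beta<S_{vv}$ being available because $|N\cup\{v\}|\le\delta^{*}(\G)+1\le n$ and $S$ is generic. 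Both routes work; yours trades the paper's hands-on linear algebra for convex duality.

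The lower bound, however, has a genuine gap, and you have located it yourself: for an arbitrary subgraph $\Hc$ with $\kappa(\Hc)=c$ you never construct the required compatible pair, namely a nonzero $\Omega\succeq 0$ supported on the diagonal and $E(\Hc)$ together with a matrix $A\in\mathrm{S}_{+}(|V(\Hc)|,c)$ that is simultaneously \emph{in general position} and satisfies $A\Omega=0$. You carry this out only for $K_{c+1}$ and, in outline, for $K_{c,c}$; a single minimum cut with two lumped sides does not capture the structure of a general $c$-connected graph. Worse, the proposed rescue --- ``a last perturbation restores genericity while preserving the failure of completability'' --- cannot be right as stated: the set of non-completable partial matrices is closed (a positive definite completion survives small perturbations of the prescribed entries), and your constructed $S$ sits exactly on its boundary, since $\Omega S=0$ only forces $\tr(P\Omega)=0$ for every $P\match{\Hc}S$, which excludes $P\succ0$ but not $P\succeq0$; a perturbation may therefore land in the completable region. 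The paper resolves precisely this tension with the Lov\'asz--Saks--Schrijver theorem: a $c$-connected graph on $p'$ vertices admits a \emph{general-position} orthonormal representation $\u_1,\ldots,\u_{p'}$ in $\R^{p'-c}$, whose Gram matrix $\Omega=(\u_i^{\top}\u_j)$ is automatically supported on the diagonal and the edges, and the Gram matrix of a basis of $\ker\Omega$ is then a rank-$c$ PSD matrix that is \emph{already} in general position (Proposition B.1 and Corollary B.2 of the paper), so no after-the-fact perturbation is needed. Without this, or an equivalent input, your lower bound is established only for complete and complete bipartite subgraphs.
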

\noindent  All the results stated in (R1) through (R4) now immediately follow from Theorem \ref{thm:main}, since by some simple calculations we can show that
\begin{enumerate}
	\item[(a)] for a decomposable graph $\G$, $\omega(\G)=\kappa^{*}(\G)+1=\delta^{*}(\G)+1=\rm{tw}(\G)+1$;
	\item [(b)] for any (arbitrary) graph $\G$,  $\omega(\G)\le \kappa^{*}(\G)+1\le \rb(\G)\le \delta^{*}(\G)+1\le \textrm{tw}(\G)+1$;
	\item[(c)] for a cycle (of any length) $\G$, $\kappa^{*}(\G)=\delta^{*}(\G)=2$;
	\item [(d)] for a $k\times m$ grid (with $k$ and $m\ge 2$), $\kappa^{*}(\G)=\delta^{*}(\G)=2$.
\end{enumerate}
Note that by Part (d) the guassian rank of every grid is $3$ which is substantially less than the upper bound given by (R2) for grids of large dimensions. The reason is that the treewidth of a $k\times m$ grid is $\min\{k,m\}$ which tends to $+\infty$ as $k$ and $ m\to +\infty$ \cite{K94}.\\

The organization of the paper is as follows. In section \ref{sec:pre} we review some basic notation and terminology in graphical models and matrix algebra, and explain how the maximum likelihood problem for graphical guassian models leads to the D\&L problem.  In this section we also discuss the concept of in general positions for vectors and matrices, which is a fundamental concept in subsequent sections. In section \ref{sec:grank} we give an alternative description of the Gaussian rank which we then use to derive some of the properties of the Gaussian rank. In section \ref{sec:main} we prove Theorem \ref{thm:main}. The proof of the upper bound is based on two key observations that if a vertex $v$ is removed from a graph $\G$, then (1) the Gaussian rank of the resulting graph is at most one less than $\rb(\G)$  (2) the Gaussian rank of the resulting graph remains equal to $\rb(\G)$ if it is larger than the number of the vertices adjacent to $v$. The proof of the lower bound is based on the concept of orthogonal representations of graphs and a theorem in \cite{L89}. In section  \ref{sec:app} we apply Theorem \ref{thm:main} to some special graphs to exactly determine their Gaussian ranks. These graphs include symmetric graphs and random graphs. We also obtain a tight numerical upper bound for the Gaussian ranks of planar graphs. 

\section{Preliminaries}\label{sec:pre}
In this section, we establish some necessary notation, terminology and definitions in graphical models, matrix algebra and geometric graph theory. We also carefully explain how the maximum likelihood problem for graphical guassian models leads to the D\&L problem.
\subsection{Graph theoretical notion, definitions} Our notation presented here closely follows the notation established in \cite{M78} and \cite{L96}. Let $\G=(V, E)$ be an undirected graph, where $V=V(\G)$ is the vertex set and $E=E(\G)$ is the edge set of $\G$. Each edge $e\in E$ is an unordered pair $ij= \{i,j\}$, where $i,j\in V$. For ease of notation, as in \cite{Uh12}, we assume that $E$ contains all the self-loops, that is,  $(i,i)$ for every $i\in V$. Otherwise stated, we always assume that $V(\G)=\{1,\ldots,p\}= [p]$.
\begin{enumerate}
\item[(1)] For a vertex $v\in V$, the set and the number of vertices adjacent to $v$ are denoted by $\mathrm{ne}(v)$ and $\deg_{\G}(v)$.
\item[(2)] A graph $\mathcal{H}$ is a subgraph of $\G$, denoted by $\mathcal{H}\subseteq \G$, if $V(\mathcal{H})\subseteq V(\G)$ and $E(\mathcal{H})\subseteq E(\G)$.
\item[(3)] For a set $V'\subseteq V$, the graph defined by $G[V']= (V', E\cap V'\times V')$ is said to be the subgraph of $\G$ induced by $V'$.
\item[(4)] For a vertex $v\in V$, the induced subgraph $\G[V\setminus\{v\}]$ is denoted by $\G-v$. Note that $\G-v$ is simply obtained by removing  $v$ and its incident edges from $\G$.
\item[(5)] For an edge $e\in E$, $\G-e$ denotes the subgraph obtained by removing $e$ from $\G$, that is, $\G-e=(V, E\setminus\{e\})$.
\end{enumerate}

\subsection{Graph parameters} Let $\mathscr{G}$ denote the set of all graphs. A graph parameter is a function $\left(\mu: \mathscr{G}\to \mathbb{R}\right):\G \mapsto \mu(\G)$. In words, a graph parameter is a function that assigns a number to each graph. In graph theory two common graph parameters are:\\
\noindent(a)~ $\delta(\G)= \min\{ \deg_{\G}(v):\: v\in V\}$, said to be the minimum degree of $\G$;\\
\noindent(b)~ $\kappa(\G)= \min\{|S|: \:  S\subseteq V \:\; \text{such that} \:\; \G[V\setminus S]\: \:\text{is disconnected} \}$, said to be the vertex connectivity number of $\G$. In words, $\kappa(\G)$ is the smallest number $k$ of vertices whose deletion separates the graph or makes it trivial.\\
\noindent Now for every graph parameter $\mu$ we can define a new graph parameter as
\[
\mu^*(\G)=\max\left\{\mu(\mathcal{H}): \: \mathcal{H}\subseteq \G\right\}.
\]
Two such defined graph parameters are $\delta^{*}(\G)$ and $\kappa^{*}(\G)$, known as the graph degeneracy number and the subgraph connectivity number of $\G$, respectively \cite{M78}. Note that $\kappa^*(\G)$  is the smallest number of vertex deletions sufficient to disconnect each subgraph of $\G$. Since $\kappa(\G)\le \delta(\G)$ we have
\begin{equation}\label{eq:kappa}
\kappa^{*}(\G)\le \delta^{*}(\G) \: \; \text{(see Part (a) in Remark \ref{rem:star}).}
\end{equation}
\begin{figure}[htbp]
\centering
\subfigure[]{
\includegraphics[scale=.08]{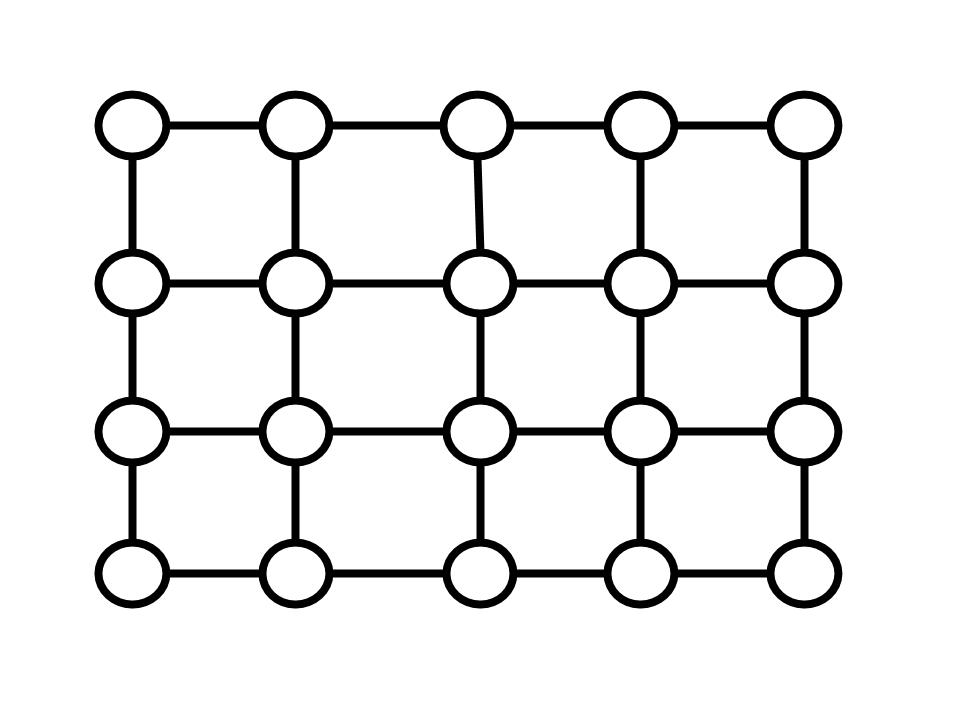}
\label{fig:subfig-a}
}
\hspace{1cm}
\subfigure[]{
\includegraphics[scale=.07]{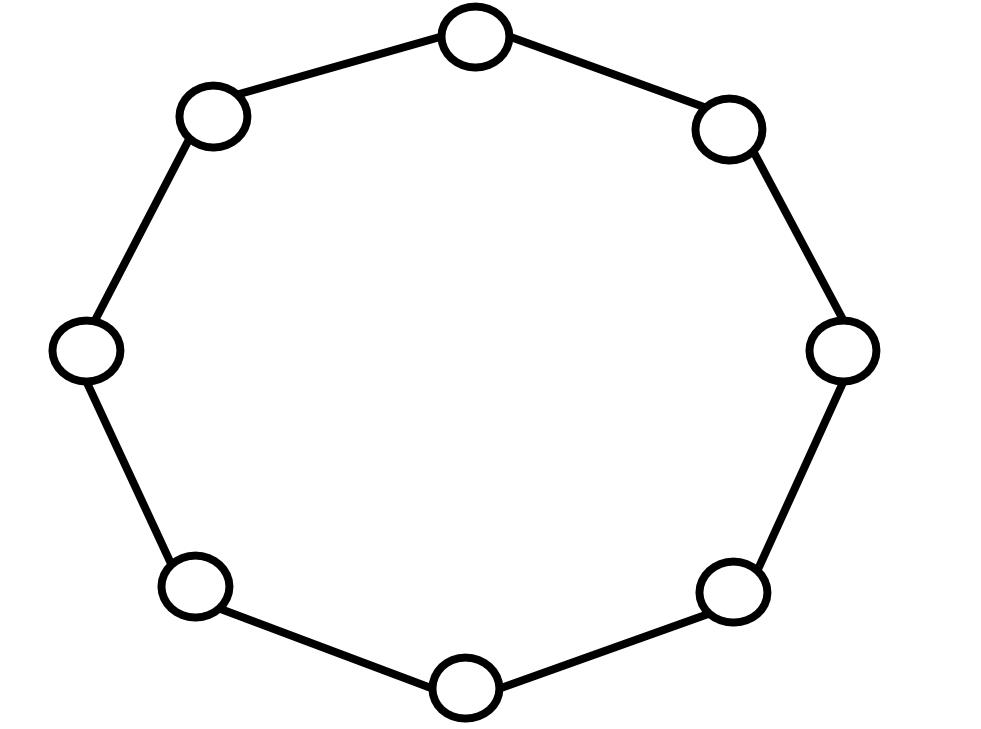}
\label{fig:subfig-b}
}
\hspace{1cm}
\subfigure[]{
\includegraphics[scale=.08]{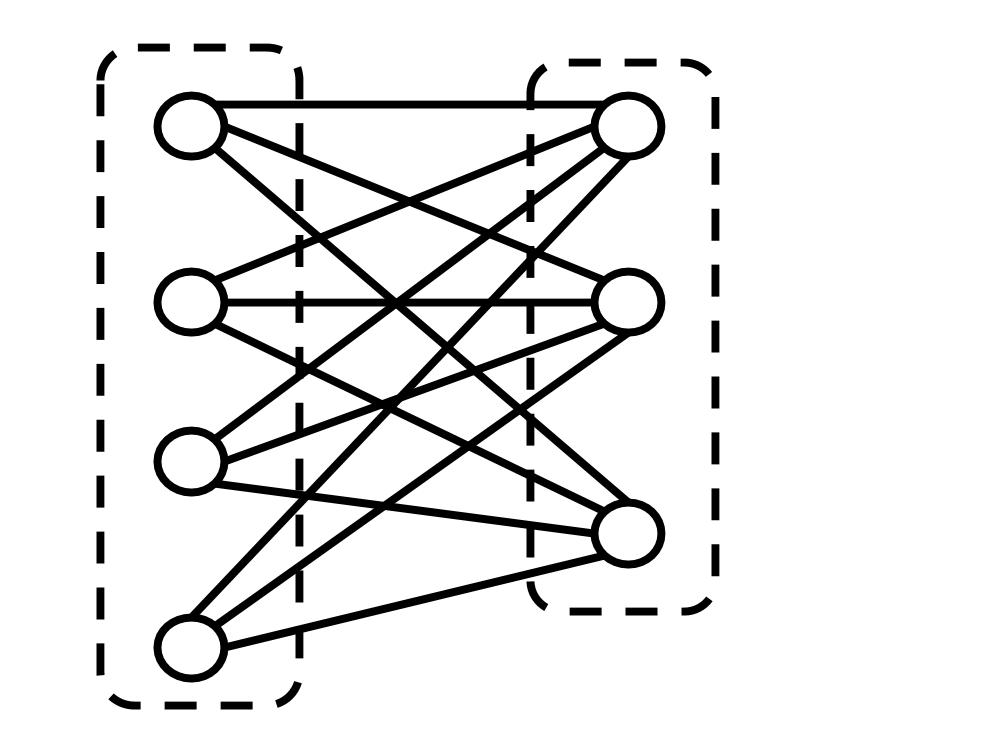}
\label{fig:subfig-c}
}
\caption{Denoted graphs are \rm{(a)} a $4\times 5$ grid, $G_{4, 5}$ \rm{(b)} a cycle of length 8, $C_8$ \rm{(c)} a complete $(4,3)$-bipartite graph, $K_{4,3}$.}
\end{figure}
\begin{Example}\label{ex:1}
Consider the graphs given by Figure \ref{fig:subfig-a}, \ref{fig:subfig-b} and \ref{fig:subfig-c}.
\begin{enumerate}
	\item[\rm{(a)}] The graph  denoted by $G_{4,5}$ in Figure \ref{fig:subfig-a} is a $4\times 5$ grid. It is easy to see that $\kappa^*(G_{4,5})=\delta^*(G_{4,5})=2$. In fact for every $k\times m$ grid (with $k$ and $m\ge 2$), $\kappa^*(G_{k, m})=\delta^*(G_{k,m})=2$.
	\item[\rm{(b)}] The graph denoted by $C_8$ in Figure \ref{fig:subfig-b} is a cycle of length $8$ with $\kappa^*(C_8)=\delta^*(C_8)=2$. This is obviously true for any arbitrary cycle $C_p$ (with length $p\ge 3$).
	\item[\rm{(c)}]  The graph  denoted by $K_{4,3}$ in Figure \ref{fig:subfig-c} is a complete $(4,3)$-bipartite graph. In general, a complete $(k,m)$-bipartite graph is a graph so that its vertex set can be partitioned into two sets with $k$ and $m$ elements such that two vertices are adjacent if and only if they are in different partitions of $V$. One can check that $\kappa^*(K_{k,m})=\delta^*(K_{k,m})=\min\{k,m\}$ \cite{M78}.
\end{enumerate}
\end{Example}
\begin{Remark}\label{rem:star}
These simple facts can be found in \cite{M78}.\\
\noindent \rm{(a)}~ If two graph parameters are such that $\mu_1(\mathcal{H})\le \mu_2(\mathcal{H})$ for every subgraph $\mathcal{H}\subseteq \G$, then $\mu_1^{*}(\G)\le \mu_2^{*}(\G)$. \\
\noindent \rm{(b)}~ If a graph parameter is non-decreasing, that is, $\Hc\subseteq \G\implies \mu(\Hc)\le \mu(\G)$, then $\mu^*(\G)=\mu(\G)$. For example, for this reason $\omega^*(\G)=\omega(\G)$ and $\rm{tw}^*(\G)=\rm{tw}(\G)$.\\
\noindent \rm{(c)}~ For every graph $\G$ we have $\omega(\G)\le \kappa(\G)+1\le \delta(\G)+1\le \rm{tw}(\G)+1$. In light of (a) and (b), this implies that
$\omega(\G)\le \kappa^*(\G)+1\le \delta^*(\G)+1\le \rm{tw}(\G)+1$.\\
\noindent (d)~ There are algorithms (specifically given in \cite{M78}) that compute $\kappa^*(\G)$ and  $\delta^*(\G)$, respectively, in no more than $\rm{O}(|V|^{3/2}|E|^2)$ and $\rm{O}(|E|)$ steps (in contrast, clique number and treewidth cannot be computed in polynomial time).
\end{Remark}


\subsection{Matrix algebra notation, definitions} The notation presented here closely follows the notation established in \cite{Z5} and \cite{M78}. For a vector $\u=(u_i: \: i\in V)\in \R^V$ and $\alpha\subseteq V$, let $\u[\alpha]$ denote the subvector $(u_i:\: i\in \alpha)\in \R^{\alpha}$. For a matrix $A=(A_{i,j})\in \R^{V\times V }$ and $\alpha$ and $\beta\subseteq V$,  let $A[\alpha,\beta]$ denote the submatrix $(a_{i,j}: i\in \alpha, j\in \beta)\in \R^{\alpha\times \beta}$. A principal submatrix $A[\alpha,\alpha]$ is simply denoted by $A[\alpha]$. Some other notations used throughout the paper are as follows. 

\begin{enumerate}
\item[(1)]  The set of $p\times p$  symmetric  matrices is denotes by $S(p)$.
\item[(2)]  The set of (symmetric) positive semi-definite matrices is denoted by $\rm{S}_{+}(p)$.
\item[(3)] The set of (symmetric) positive definite matrices is denoted by  $\rm{S}_{++}(p)$.
\item[(4)]  The set of matrices $A\in \rm{S}_{+}(p)$ of a fixed rank $d$ is denoted by $\rm{S}_{+}(p,d)$.
\item[(5)]  $A\succeq 0$ (or $A\succ 0$) denotes $A$ is a positive semidefinite (or positive definite) matrix without specifying its dimension.
\item[(6)]  The Moore-Penrose generalized inverse of $A$  is denoted by $A^{\dag}$. Note that $A^{\dag}A A^{\dag}=A^{\dag}$ and $A A^{\dag} A=A$. It is clear that $A^{\dag}=A^{-1}$ when $A$ is non-singular.

\item[(7)] Let $A$ be partitioned as $ A=
\begin{pmatrix}
A[\alpha]&A[\alpha,\beta]\\
A[\beta,\alpha]&A[\beta]
\end{pmatrix}
$, where $\beta=V\setminus \alpha$. Then the Schur complement of $A[\alpha]$, denoted by $A[\beta|\alpha]$, is defined by
\[
  A[\beta|\alpha]=A[\beta]-A[\beta,\alpha]A[\alpha]^{\dag}A[\alpha,\beta].
	\]
\end{enumerate}
\textbf{Convention:} For convenience in subsequent sections, otherwise stated, we always assume that given matrices are non-zero.
\subsection{Vectors and matrices in general positions} Let $\vb_1,\ldots,\vb_p$ be $p$ vectors in $\mathbb{R}^{d}$. These vectors are said to be in general position if any $d$ vectors among them are linearly independent. A similar concept is defined for square matrices. A  square matrix $A$ of rank $d$ is said to be in general position if every $d\times d$ principal submatrix of $A$ is non-singular.
\begin{Lemma}\label{lem:generic}
If $p\ge d$ vectors are randomly selected in $\R^d$ (with a probability distribution dominated by Lebesgue measure), then with probability one they are in general position.
\end{Lemma}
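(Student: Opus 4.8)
The plan is to show that the ``bad'' configurations form a Lebesgue-null subset of $(\R^d)^p\cong\R^{dp}$, so that any distribution absolutely continuous with respect to Lebesgue measure assigns them probability zero. By definition the $p$-tuple $(\vb_1,\ldots,\vb_p)$ fails to be in general position exactly when some $d$ of the vectors are linearly dependent, so it suffices to bound the probability of each such event and take a union bound over the $d$-element index sets.

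First I would fix a $d$-element index set $I=\{i_1<\cdots<i_d\}\subseteq[p]$ and introduce the function $f_I\colon\R^{dp}\to\R$ defined by $f_I(\vb_1,\ldots,\vb_p)=\det\bigl[\vb_{i_1}\mid\cdots\mid\vb_{i_d}\bigr]$, the determinant of the $d\times d$ matrix whose columns are the indicated vectors. Expanding the determinant shows $f_I$ is a polynomial in the $dp$ coordinates of the $\vb_j$, and it is not the zero polynomial: evaluating at any configuration in which $\vb_{i_1},\ldots,\vb_{i_d}$ are the standard basis vectors of $\R^d$ yields $f_I=1$. The vectors $\vb_{i_1},\ldots,\vb_{i_d}$ are linearly dependent precisely when $f_I=0$, so the set of $p$-tuples not in general position is $\bigcup_{|I|=d} Z(f_I)$, where $Z(f_I)=f_I^{-1}(\{0\})$.

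Next I would use the standard fact that the zero set of a nonzero polynomial on $\R^m$ is Lebesgue-null; this follows by induction on $m$ via Fubini's theorem, since for almost every fixing of $m-1$ of the variables the polynomial restricts to a nonzero univariate polynomial, which has only finitely many roots. Applying this with $m=dp$ gives $\lambda_{dp}\bigl(Z(f_I)\bigr)=0$ for each of the $\binom{p}{d}$ index sets $I$, hence $\bigcup_{|I|=d}Z(f_I)$ is a finite union of null sets and is itself Lebesgue-null. Since the law of $(\vb_1,\ldots,\vb_p)$ is dominated by $\lambda_{dp}$, this union has probability zero, so with probability one every $d$ of the vectors are linearly independent, i.e.\ the vectors are in general position.

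I do not foresee a genuine obstacle here: the only slightly technical point is the claim that the zero set of a nonzero polynomial has measure zero (the Fubini induction sketched above), and the only thing one must verify explicitly is that each $f_I$ is not identically zero, which is immediate from exhibiting a single configuration with nonzero determinant. An entirely analogous argument, replacing column determinants of selected vectors by principal minors of the matrix entries, will serve for the matrix version of general position used later in the paper.
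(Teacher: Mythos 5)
Your proof is correct and follows essentially the same route as the paper: identify the non-generic configurations with the union over $d$-element index sets of the zero sets of the corresponding $d\times d$ minor polynomials, note each is a proper algebraic (hence Lebesgue-null) set, and conclude by absolute continuity of the sampling law. You in fact supply slightly more detail than the paper by explicitly exhibiting a configuration where the determinant is nonzero and sketching the Fubini induction for the null-set claim.
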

\noindent In other words, Lemma \ref{lem:generic} says that being in general position is a generic property of any $p\ge d$ vectors in $\R^d$.
\begin{proof}
First we note that randomly selecting $p$ vectors in $\R^d$ is equivalent to randomly selecting a matrix $W\in\R^{d\times p}$. Let $X=(x_{i,j})$ denote a $d\times p $ matrix of variables. For each $\alpha=\{i_1,\ldots,i_d\}\subseteq [p]$ we define the polynomial
\[
p_{\alpha}(X)=\det\begin{pmatrix}
x_{1,i_1}&\ldots&x_{1,i_d}\\
\vdots&\ddots&\vdots\\
x_{d,i_1}&\ldots&x_{d,i_d}
\end{pmatrix} \: \: \text{and} \: \: \mathbb{V}_{\alpha}=\{W\in\R^{d\times p}: \: p_{\alpha}(W)=0\}.
\]
Each $\mathbb{V}_{\alpha}\subsetneq\R^{d\times p}$ is an algebraic set and its (Lebesgue) measure is therefore zero. Thus,
$
\rm{Pr}\left( W\in R^{d\times p}\setminus \bigcup_{\alpha} \mathbb{V}_{\alpha}\right)=1$.
This completes the proof, since each $d$ columns of a matrix $W\in R^{d\times p}\setminus \bigcup_{\alpha} \mathbb{V}_{\alpha}$ are linearly independent.
\end{proof}
The next lemma shows how two concepts of in general positions, for vectors and square matrices, are related.
\begin{Lemma}\label{lem:equiv}
Let $A\in \rm{S}_{+}(p,d)$. Then $A$ is in general position if and only if  there are in general-position vectors $\vb_1,\ldots,\vb_p\in \R^d$ such that
\begin{equation}\label{eq:gen}
A=\left(
\begin{matrix}
\vb_1^{\top}\\
\vdots\\
\vb_p^{\top}
\end{matrix}
\right)
\left(
\begin{matrix}
\vb_1&\cdots&\vb_p
\end{matrix}
\right)=(\vb_i^{\top}\vb_j)_{1\le i,j\le p}.
\end{equation}
\end{Lemma}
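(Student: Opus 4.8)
The plan is to obtain the statement from the elementary observation that every principal submatrix of a Gram matrix is again a Gram matrix, combined with the multiplicativity of the determinant.

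I would begin from a rank-preserving square-root factorization. Since $A\in\rm{S}_{+}(p,d)$, a spectral decomposition $A=Q\Lambda Q^{\top}$ with $\Lambda$ diagonal having exactly $d$ positive entries produces a matrix $B\in\R^{d\times p}$ with $\rk B=d$ and $A=B^{\top}B$; taking $\vb_1,\ldots,\vb_p\in\R^{d}$ to be the columns of $B$ yields $A=(\vb_i^{\top}\vb_j)_{1\le i,j\le p}$, which is exactly \eqref{eq:gen}. Conversely, any representation of the form \eqref{eq:gen} arises from such a $B$: set $B=(\vb_1\;\cdots\;\vb_p)$, so that $A=B^{\top}B$; and because $\rk A=\rk(B^{\top}B)=\rk B$, the hypothesis $\rk A=d$ forces $\rk B=d$, hence $p\ge d$ and the $\vb_i$ span $\R^{d}$.

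The crux is then the following identity. Fix $\alpha\subseteq[p]$ with $|\alpha|=d$ and let $B[\,\cdot\,,\alpha]$ denote the $d\times d$ matrix whose columns are $\vb_i$, $i\in\alpha$. The principal submatrix $A[\alpha]$ equals $B[\,\cdot\,,\alpha]^{\top}B[\,\cdot\,,\alpha]$, the Gram matrix of $\{\vb_i:i\in\alpha\}$, so $\det A[\alpha]=\bigl(\det B[\,\cdot\,,\alpha]\bigr)^{2}$. Consequently $A[\alpha]$ is non-singular if and only if $\det B[\,\cdot\,,\alpha]\ne 0$, that is, if and only if the vectors $\{\vb_i:i\in\alpha\}$ are linearly independent.

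Both implications then follow at once. If $A$ is in general position, every $d\times d$ principal submatrix $A[\alpha]$ is non-singular, so every $d$ of the vectors $\vb_1,\ldots,\vb_p$ obtained from the factorization above are linearly independent, i.e.\ those vectors are in general position and satisfy \eqref{eq:gen}. Conversely, given \eqref{eq:gen} with $\vb_1,\ldots,\vb_p$ in general position, the same determinant identity shows every $d\times d$ principal submatrix of $A$ is non-singular, so $A$ is in general position. I do not expect a genuine obstacle; the only points deserving a word of care are that the factorization must be taken with $\rk B=\rk A=d$ (so that $d$-element index sets and $d\times d$ minors are indeed the relevant objects) and the implicit hypothesis $p\ge d$, which is automatic since $\rm{S}_{+}(p,d)$ is empty otherwise.
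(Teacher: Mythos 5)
Your proof is correct and follows essentially the same route as the paper: factor $A=B^{\top}B$ with $B\in\R^{d\times p}$ of rank $d$ via the spectral decomposition, and observe that each $d\times d$ principal submatrix $A[\alpha]$ is the Gram matrix of the columns indexed by $\alpha$. The only cosmetic difference is that you settle both directions at once with $\det A[\alpha]=\bigl(\det B[\,\cdot\,,\alpha]\bigr)^{2}$, whereas the paper exhibits an explicit null vector of $A[\alpha]$ for the forward implication; the substance is identical.
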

\begin{proof}
\begin{enumerate}
	\item[$\implies)$] Suppose $A\in \rm{S}_{+}(p,d)$. Let  each pair of $(\lambda_1,\u_1),\ldots, (\lambda_d, \u_d)$ denote a positive eigenvalue and its corresponding eigenvector of $A$. Let us set $\w_i=\sqrt{\lambda_i}\u_i$, for $i=1,\ldots,d$. Then we have
	\begin{equation}\label{eq:decomp}
	A=\sum_{i=1}^{d}\w_i\w_i^{\top}=W^{\top}W, \: \text{where}\:\: W=
	\begin{pmatrix}
		\w_1^{\top}\\\vdots\\\w_d^{\top}
	\end{pmatrix}=\begin{pmatrix}
		\vb_1&\ldots&\vb_p
	\end{pmatrix}\in\R^{d\times p}.
	\end{equation}
Note that $\vb_1,\ldots,\vb_p\in \R^d$ are the columns of $W$.  Suppose for $\alpha=\{i_1,\ldots,i_d\}\subseteq [p]$ the vectors $\vb_{i_1},\ldots,\vb_{i_d}$ are linearly dependent. Thus, there is a non-zero vector $\x_{\alpha}=(x_{i_1},\ldots,x_{i_d})^{\top}\in\R^d$ such that
	\[
	\sum_{i\in \alpha}x_{i}\vb_{i}=  W\begin{pmatrix}
		\x_{\alpha}\\0
	\end{pmatrix}=0
	\implies 
	W^{\top}W\begin{pmatrix}
		\x_\alpha\\0
	\end{pmatrix}=A\begin{pmatrix}
		\x_{\alpha}\\0
	\end{pmatrix}=0.
	\]
Thus $A[\alpha]\x_{\alpha}=0$ and consequently $A[\alpha]$ is singular.
	\item[$\Longleftarrow$)]  Suppose $\vb_1,\ldots,\vb_p\in \R^d$ are in general position. Let $A$ be defined as in Equation \eqref{eq:gen}. Then for each $\alpha=\{i_1,\ldots,i_d\}\subseteq [p]$ the submatrix
	\[
	A[\alpha]=\left(
\begin{matrix}
\vb_{i_1}^{\top}\\
\vdots\\
\vb_{i_d}^{\top}
\end{matrix}
\right)
\left(
\begin{matrix}
\vb_{i_1}&\cdots&\vb_{i_d}
\end{matrix}
\right) \; \text{is obviously non-singular.}
	\]
\end{enumerate}
\end{proof}
\begin{Corollary}\label{cor:cor1}
Suppose $\x_1,\ldots,\x_n$ are $n$ observations from a $p$-variate distribution. Then with probability one the sample covariance matrix $\mathcal{S}$ and the sample correlation matrix $\mathcal{R}$ are in general positions.
\end{Corollary}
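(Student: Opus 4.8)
The plan is to reduce the statement to Lemma~\ref{lem:equiv}, using Lemma~\ref{lem:generic} to supply the required general-position vectors. Write $X=(\x_1,\ldots,\x_n)\in\R^{p\times n}$ for the matrix whose columns are the observations, so that $\Sc=\tfrac1n XX^{\top}=\tfrac1n(\y_i^{\top}\y_j)_{1\le i,j\le p}$, where $\y_1,\ldots,\y_p\in\R^n$ are the \emph{rows} of $X$. The point of this bookkeeping is that these $\y_i$ are indexed by the $p$ variables, which is exactly the index set over which the principal submatrices of $\Sc$ are formed. Since the $\x_i$ are i.i.d.\ from a $p$-variate distribution dominated by Lebesgue measure, the law of $X$ (equivalently of $X^{\top}\in\R^{n\times p}$) is dominated by Lebesgue measure on the ambient matrix space, because a product of absolutely continuous measures is absolutely continuous.

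First suppose $n\le p$. Apply Lemma~\ref{lem:generic} to the randomly chosen matrix $X^{\top}\in\R^{n\times p}$ (taking $d=n$ in that lemma): with probability one every $n$ of its columns --- i.e.\ every $n$ of the vectors $\y_1,\ldots,\y_p$ --- are linearly independent, so $\y_1,\ldots,\y_p$ are in general position in $\R^n$, hence span it, hence $\Sc\in\rm{S}_{+}(p,n)$. Rescaling $\vb_i:=\tfrac1{\sqrt n}\y_i$ (which does not affect general position) gives $\Sc=(\vb_i^{\top}\vb_j)$, and the ``$\Longleftarrow$'' direction of Lemma~\ref{lem:equiv} shows $\Sc$ is in general position. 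If instead $n>p$, the same lemma applied to $X$ shows $X$ has rank $p$ with probability one, so $\Sc\succ0$; its only $p\times p$ principal submatrix is $\Sc$ itself, so $\Sc$ is (trivially) in general position.

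For the correlation matrix, observe that $\mathcal{R}=D^{-1/2}\Sc D^{-1/2}$ with $D=\diag(\Sc)$, and $D$ is positive definite with probability one because each $\Sc_{ii}=\tfrac1n\|\y_i\|^2$ is then positive. Hence for every $\alpha\subseteq[p]$ we have $\mathcal{R}[\alpha]=D[\alpha]^{-1/2}\Sc[\alpha]D[\alpha]^{-1/2}$, so $\mathcal{R}[\alpha]$ is nonsingular exactly when $\Sc[\alpha]$ is, and $\rk\mathcal{R}=\rk\Sc$; thus $\mathcal{R}$ inherits the general-position property from $\Sc$. (Equivalently, $\mathcal{R}$ is the Gram matrix of the normalized vectors $\vb_i/\|\vb_i\|$, which are again in general position, so Lemma~\ref{lem:equiv} applies to $\mathcal{R}$ directly.)

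There is no serious obstacle here; the only points needing care are applying Lemma~\ref{lem:generic} to $X^{\top}$ rather than $X$ --- so that general position is obtained for the variable-indexed vectors $\y_i$ and not merely for the observations --- and keeping track of $\rk\Sc$ so that the hypothesis $\Sc\in\rm{S}_{+}(p,d)$ of Lemma~\ref{lem:equiv} is genuinely satisfied, which is what necessitates the short separate treatment of the full-rank case $n>p$.
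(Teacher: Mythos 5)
Your proof is correct and follows essentially the same route as the paper: apply Lemma~\ref{lem:generic} to the variable-indexed columns of the data matrix so that $\y_1,\ldots,\y_p$ are in general position in $\R^n$, then invoke Lemma~\ref{lem:equiv} for the Gram matrix $\Sc$. Your extra care with the full-rank case $n>p$ and the explicit diagonal-rescaling argument showing $\mathcal{R}$ inherits general position from $\Sc$ fills in details the paper's proof leaves implicit.
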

\begin{proof}
 Let
\[
\mathcal{X}=\begin{pmatrix}\x_1^{\top}\\ \vdots\\\x_n^{\top}\end{pmatrix}=\begin{pmatrix}\y_1& \ldots&\y_p\end{pmatrix}\in\R^{n\times p}.
\]
be the data matrix. Then by Lemma \ref{lem:generic} with probability one $\y_1,\ldots, \y_p$, the columns of $\mathcal{X}$, are in general position in $\R^n$. The result now follows from Lemma \ref{lem:equiv}, since whenever the columns of $\mathcal{X}$ are in general position $\mathcal{S}=1/n\left(\mathcal{X}^{\top}\mathcal{X}\right)$ is also in general position.
\end{proof}
A useful property of an in general-position matrix $A\in \rm{S}_{+}(p,d)$ is that in some sense it can be extended to another in general-position matrix in $\rm{S}_{+}(p+1,r)$ or $\rm{S}_{+}(p+1,r+1)$. The next lemma formalizes this fact.
\begin{Lemma}\label{lem:ext}
Let $A\in \rm{S}_{+}(p,d)$ be in general position.\\\

\noindent (a)~  Then there is a vector $\w\in \R^p$ such that $A-\w\w^{\top}\in \rm{S}_{+}(p,d)$ is in general position. This in return  implies that the matrix
\[
\begin{pmatrix}
	1&\w^T\\
	\w& A
\end{pmatrix}\in \rm{S}_{+}(p+1,d)\: \text{is in general position.}
\]
\noindent (b)~ If $d\le p-1$, then there is a vector $\u\in \R^p$ such that $A+\u\u^{\top}\in \rm{S}_{+}(p,d+1)$ is in general position. This in return  implies that the matrix
\[
\begin{pmatrix}
	1&\u^{\top}\\
	\u& A+\u\u^{\top}
\end{pmatrix}\in \rm{S}_{+}(p+1,d+1)\: \text{is in general position.}
\]
\end{Lemma}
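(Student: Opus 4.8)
The plan is to pass to the vector description of Lemma~\ref{lem:equiv} and then modify $A$ by a single generic rank-one term. Since $A$ is in general position, Lemma~\ref{lem:equiv} provides in-general-position vectors $\vb_1,\dots,\vb_p\in\R^d$ — so any $d$ of them, and hence any $d-1$ of them, are linearly independent — with $A=W^{\top}W$ for $W=(\vb_1\ \cdots\ \vb_p)\in\R^{d\times p}$. Recall $d\le p$, and in part (b) also $d+1\le p$.

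For (a) I would look for $\w$ of the form $\w=W^{\top}\c$ with $\c\in\R^d$; this is no loss of generality, since a component of $\w$ outside the column space of $A$ would destroy positive semidefiniteness of $A-\w\w^{\top}$. Take $\c$ to be a unit vector and write the orthogonal projection $I-\c\c^{\top}$ onto $\c^{\perp}$ as $P^{\top}P$ with $P\in\R^{(d-1)\times d}$ of full row rank. Then $A-\w\w^{\top}=(PW)^{\top}(PW)$ is positive semidefinite of rank $d-1$ with representing vectors $P\vb_1,\dots,P\vb_p\in\R^{d-1}$, so by the easy direction of Lemma~\ref{lem:equiv} it suffices to choose $\c$ so that every $d-1$ of the $P\vb_i$ are linearly independent. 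Because $\ker P=\operatorname{span}\{\c\}$ and any $d-1$ of the $\vb_i$ are already independent, $P\vb_{i_1},\dots,P\vb_{i_{d-1}}$ become dependent exactly when $\c\in\operatorname{span}\{\vb_{i_1},\dots,\vb_{i_{d-1}}\}$, a proper subspace of $\R^d$. There are only $\binom{p}{d-1}$ such subspaces, so by the measure-zero argument behind Lemma~\ref{lem:generic} a generic unit vector $\c$ avoids them all; any such $\c$ works.

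For (b) I would instead enlarge the factorization: put $\widetilde W=\left(\begin{smallmatrix}W\\ \u^{\top}\end{smallmatrix}\right)\in\R^{(d+1)\times p}$, so that $A+\u\u^{\top}=\widetilde W^{\top}\widetilde W$ has rank $d+1$ as soon as $\u^{\top}$ lies outside the $d$-dimensional row space of $W$ (possible because $d\le p-1$), and its representing vectors are $\widetilde\vb_i=\left(\begin{smallmatrix}\vb_i\\ u_i\end{smallmatrix}\right)\in\R^{d+1}$. Expanding a determinant $\det(\widetilde\vb_{i_1}\ \cdots\ \widetilde\vb_{i_{d+1}})$ along its last row shows it is a linear functional of $\u$ whose coefficients are $d\times d$ minors of the $\vb_i$, hence all nonzero by general position of the $\vb_i$; thus each such determinant vanishes only on a hyperplane of $\R^p$. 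A generic $\u$ avoids the row space of $W$ together with these $\binom{p}{d+1}$ hyperplanes, and then by the easy direction of Lemma~\ref{lem:equiv} (applied in $\R^{d+1}$) together with Lemma~\ref{lem:generic} the matrix $A+\u\u^{\top}$ is in general position of rank $d+1$.

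For the two ``this in return implies'' clauses I would pass to Schur complements with respect to the top-left entry $1$: in $\left(\begin{smallmatrix}1&\w^{\top}\\ \w&A\end{smallmatrix}\right)$ this Schur complement is $A-\w\w^{\top}$, and in $\left(\begin{smallmatrix}1&\u^{\top}\\ \u&A+\u\u^{\top}\end{smallmatrix}\right)$ it is $A$; by the Schur criterion for positive semidefiniteness together with rank-additivity across a Schur complement, the two bordered matrices are positive semidefinite of ranks $d$ and $d+1$ respectively. General position then reduces to book-keeping over principal submatrices $M[\alpha]$ of the appropriate size: if the new index is not in $\alpha$, then $M[\alpha]$ is a principal submatrix of $A$ (resp.\ of $A+\u\u^{\top}$) of that size, nonsingular by the general position just established; if the new index is in $\alpha$, then the Schur complement of the $1$ inside $M[\alpha]$ is a principal submatrix of $A-\w\w^{\top}$ (resp.\ of $A$) of size $d-1$ (resp.\ $d$), again nonsingular. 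I expect the genericity step to be the conceptual core and the Schur-complement book-keeping to be the main grind; the only mildly delicate corner is the degenerate case $d=1$ in part (a), where $A-\w\w^{\top}$ is the zero matrix and is best noted separately.
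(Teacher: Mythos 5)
Your part (a) proves a different statement from the one asserted, and the discrepancy matters downstream. By insisting that $\c$ be a \emph{unit} vector you force $\w^{\top}A^{\dag}\w=\c^{\top}W(W^{\top}W)^{\dag}W^{\top}\c=\|\c\|^{2}=1$, which is precisely the boundary case in which a rank-one downdate loses a rank: as you yourself observe, $A-\w\w^{\top}=(PW)^{\top}(PW)$ has rank $d-1$, so it does \emph{not} lie in $\mathrm{S}_{+}(p,d)$ as the lemma claims. This is not cosmetic. The rank-preserving version is exactly what the paper uses in Lemma \ref{lem:key}: there $B-\w\w^{\top}$ must remain an in-general-position matrix of rank exactly $r_{0}=\rb(\G-v)$ so that the defining property of $r_{0}$ yields the completion $Q$; a rank-$(r_{0}-1)$ matrix is not covered by that property. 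The fix is a one-liner and in fact removes all genericity from part (a): take $\|\c\|<1$ (the paper's choice $\w=A\x$ with $\x^{\top}A\x<1$ is precisely $\c=W\x$ with $\|\c\|<1$). Then $I-\c\c^{\top}$ is positive definite, so writing it as $M^{\top}M$ with $M$ invertible gives $A-\w\w^{\top}=(MW)^{\top}(MW)$ with representing vectors $M\vb_{i}$, and any $d$ of these are independent exactly when the corresponding $\vb_{i}$ are; no avoidance of bad subspaces is needed, and your ``degenerate'' case $d=1$ disappears. You should also record the explicit form $\w=A\x$, since Remark \ref{rem:ext} and Lemma \ref{lem:key} rely on $\w[\tau]=A[\tau]\x_{\tau}$, which a pure existence argument does not supply.

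To be fair, the two clauses of part (a) are in tension as literally stated: if $A-\w\w^{\top}$ has rank $d$ then $1-\w^{\top}A^{\dag}\w>0$, so the bordered matrix has rank $d+1$ and cannot be in general position (its $(d+1)\times(d+1)$ principal submatrices sitting inside $A$ are singular); your unit-norm choice is the one that makes the bordered matrix rank $d$ and in general position. So your construction matches the second clause while the paper's matches the first; they cannot share one $\w$, and the version the rest of the paper actually needs is the rank-preserving one. Part (b) is correct and essentially the paper's argument, with a welcome improvement: where the paper merely asserts that the zero sets of $p_{\alpha}(\u)=\det\bigl(A[\alpha]+\u[\alpha]\u[\alpha]^{\top}\bigr)$ are proper algebraic subsets of $\R^{p}$, your cofactor expansion of $\det\bigl(\widetilde{\vb}_{i_1}\cdots\widetilde{\vb}_{i_{d+1}}\bigr)$ along the last row actually verifies non-vanishing via the nonzero $d\times d$ minors of the $\vb_{i}$. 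The Schur-complement bookkeeping for the bordered matrices is fine.
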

\begin{proof} 
(a)~ Let $\tau=[d]$ and choose a non-zero vector $\x_{\tau}\in \R^d$ such that $\x_{\tau}^{\top}A[\tau]\x_{\tau}<1$. We set
\[
\x=\begin{pmatrix}
	\x_{\tau}\\
	0
\end{pmatrix} \in\R^p \;\: \text{and} \;\:  \w=A\x=
\begin{pmatrix}
	A[\tau]\x_{\tau}\\
	A[\gamma,\tau]\x_{\tau}
\end{pmatrix},
\]
where $\gamma=V\setminus \tau$.
Note that $A\succeq 0$ and $\mathbf{x}^{\top}A\mathbf{x}=\x_{\tau}^{\top}A[\tau]\x_{\tau}<1$. Thus we have
 \[
1-\w^{\top}A^{\dag}\w=1-\x^{\top} A\x>0,
\]
and therefore $B= A-\w\w^{\top}\in\rm{S}_{+}(p)$, by Theorem 1.20 in \cite{Z5}. Now we claim that $B$ has rank $d$ and is in general position. It suffices to show that $B[\alpha]=A[\alpha]-\w[\alpha]\w[\alpha]^{\top}\succ 0$ for every set $\alpha=\{i_1,\ldots,i_{d} \}\subseteq V$. Let $\beta=V\setminus \alpha$. Then
\[
\w[\alpha]=(A\mathbf{x})[\alpha]=\left(\begin{matrix}
A[\alpha]&A[\alpha, \beta]
\end{matrix}\right)\left(\begin{matrix}\mathbf{x}[\alpha]\\\mathbf{x}[\beta]\end{matrix}\right)=A[\alpha]\mathbf{x}[\alpha]+A[\alpha, \beta]\mathbf{x}[\beta].
\]
Thus, if we set $\y_{\alpha}=\mathbf{x}[\alpha]+A[\alpha]^{-1}A[\alpha, \beta]\mathbf{x}[\beta]$, then $A[\alpha]\y_{\alpha}=\w[\alpha]$. Our claim is established if we can show that
\[
\begin{pmatrix}
1&\w[\alpha]^{\top}\\
\w&A[\alpha]
\end{pmatrix} \succ 0\, ,  \: \text{or equivalently} \:
1-\w^{\top}[\alpha]A[\alpha]^{-1}\w[\alpha]=1-\y_{\alpha}^{\top}A[\alpha]\y_{\alpha}>0.
\]
We verify that the right-hand side inequality holds by writing
\begin{align*}
\y_{\alpha}^{\top}A[\alpha]\y_{\alpha}&=\left(\mathbf{x}[\alpha]+A[\alpha]^{-1}A[\alpha, \beta]\mathbf{x}[\beta] \right)^{\top} A[\alpha]\left( \mathbf{x}[\alpha]+A[\alpha]^{-1}A[\alpha, \beta]\mathbf{x}[\beta]\right)\\
&=\mathbf{x}[\alpha]^{\top}A[\alpha]\mathbf{x}[\alpha]+2\mathbf{x}[\alpha]^{\top}A[\alpha, \beta]\mathbf{x}[\beta]+\mathbf{x}[\beta]^{\top}A[\beta,\alpha]A[\alpha]^{-1}A[\alpha, \beta]\mathbf{x}[\beta]\\
&=\mathbf{x}[\alpha]^{\top}A[\alpha]\mathbf{x}[\alpha]+2\mathbf{x}[\alpha]^{\top}A[\alpha, \beta]\mathbf{x}[\beta]+\mathbf{x}[\alpha]^{\top}A[\alpha, \beta]\mathbf{x}[\beta]-\mathbf{x}[\beta]^{\top}A[\beta|\alpha]\mathbf{x}[\beta]\\
&=\left(\begin{matrix}
\mathbf{x}[\alpha]\\ \mathbf{x}[\beta]
\end{matrix}\right)^{\top}
\left(\begin{matrix}
A[\alpha]&A[\alpha, \beta]\\
A[\beta,\alpha]&A[\beta]
\end{matrix}
\right)
\left(\begin{matrix}
\mathbf{x}[\alpha]\\
\mathbf{x}[\beta]
\end{matrix}\right)-\mathbf{x}[\beta]^{\top}A[\beta|\alpha]\mathbf{x}[\beta]\\
&=\mathbf{x}^{\top}A\mathbf{x}-\mathbf{x}[\beta]^{\top}A[\beta|\alpha]\mathbf{x}[\beta]\\
&\le \mathbf{x}^{\top}A\mathbf{x}\qquad\text{(since $A\succeq 0$ and therefore $A[\beta|\alpha]\succeq 0$)}\\
&<1.
\end{align*}
Therefore, $B[\alpha]\succ 0$ and $\rk(B[\alpha])=|\alpha|=d$. This shows that $B=A-\w\w^{\top}\in \rm{S}_{+}(p,d)$ is in general position. Now by using Theorem 1.20 and the Guttman rank additivity formula in \cite{Z5} we conclude that
\[
\begin{pmatrix}
	1&\w^{\top}\\
	\w&A
\end{pmatrix}\in \rm{S}_{+}(p+1,d) \: \text{is in general position.}
\]
(b)~ First note that for a vector $\u\in \R^p$, $\rk(A+\u\u^{\top})=\rk(A)$ if and only if $\u\in\texttt{Range}(A)$, otherwise $\rk(A+\u\u^{\top})=\rk(A)+1$ \cite{HO10}. Now it suffices to find a vector $\u\in\R^p\setminus \texttt{Range}(A)$ such that $p_{\alpha}(\u)=\det\left(A[\alpha]+\u[\alpha]\u[\alpha]^{\top}\right)\ne 0$  for each $\alpha=\{i_1,\ldots, i_{d+1} \}\subseteq V$. For this, we note that $\texttt{Range}(A)$ and the zeros of the polynomials $p_{\alpha}(\x)$ are all algebraic sets $\subsetneq \R^p$ and therefore have zero measures. Thus we can choose a vector $\u$ in the nonempty set $\R^p\setminus \left(\texttt{Range}(A) \bigcup_{\alpha}\{ \x:\: p_{\alpha}(\x)=0\}\right)$. For this vector the matrix $A+\u\u^{\top}\in \rm{S}_{+}(p,d+1)$ is in general position. The rest of the proof is similar to that of Part (a).
\end{proof}
\begin{Remark}\label{rem:ext}
As the proof shows, Part (a) of Lemma \ref{lem:ext} holds in particular for $\w=A\x$, where $\x=\begin{pmatrix}
	\x_{\tau}\\ 0
\end{pmatrix}$ and $\x_{\tau}\in \R^{d}$ satisfies $\x_{\tau}^{\top} A[\tau]\x_{\tau}<1$.
\end{Remark}
\subsection{Graphical Gaussian models} Let $\G=(V, E)$ be a graph with $p$ vertices. A Gaussian distribution  $\N_{p}(0,\Sigma)$ is said to be Markov with respect to $\G$ if
\[
(\Sigma^{-1})_{i,j}=0 \:\;  \text{whenever} \:\;  ij\not\in E.
\]
The graphical Gaussian model over $\G$, denoted by $\mathscr{N}(\G)$, is the family of $\N_{p}(0,\Sigma)$ Markov with respect to $\G$. Let
\[
\mathrm{Z}_{\G}=\{A\in S(p):\: A_{i,j}=0 \: \: \text{for each}\:\: ij\notin E \} \:\; \text{and} \;\: \mathrm{P}_{\G}= \mathrm{Z}_{\G}\cap \rm{S}_{++}(p).
\]
 Then $\mathrm{Z}_{\G}$ is an $|E|$-dimensional linear space and $\mathrm{P}_{\G}$ is an open convex cone in $\mathrm{Z}_{\G}$.
Note that $\mathrm{P}_{\G}$ is in fact the set of inverse covariance matrices for the graphical Gaussian model $\mathscr{N}(\G)$.

\subsection{The maximum likelihood problem} Suppose $\bx_1, \ldots,\bx_n$ are  $n$ observations taken from $\N_p(0,\Sigma)\in\mathscr{N}(\G)$. Then the MLE of $\Sigma$ is the solution to the following optimization problem \cite{H10}:
\begin{align*}
\underset{\Sigma}{\text{argmax}}&\left( -\frac{n}{2}\log\det\Sigma-\frac{n}{2}\mathrm{tr}(\Sigma^{-1}\Sc) \right)\\
 & \text{subject to} \: \left(\Sigma^{-1}\right)_{i,j}=0, \quad \forall ij\not\in E\\
&\qquad\qquad\qquad\quad\:\,\Sigma \succ 0.
\end{align*}
 Recall that $\Sc=1/n\sum_{i=1}^{n}\bx_i\bx_i^{\top}$ is the sample covariance matrix. In terms of the inverse covariance matrix $\Omega=\Sigma^{-1}$, this optimization problem can be recast as the convex optimization problem:
\begin{align}\label{eq:convex}
\underset{\Omega}{\text{argmin}} & \left(-\frac{n}{2}\log\det \Omega+\frac{n}{2}\mathrm{tr}(\Omega \Sc) \right)\\
&\notag \text{subject to} \:  \Omega\in \mathrm{P}_{\G}.
\end{align}
\noindent A standard optimization procedure such as in \cite{H10} or \cite{D8} shows that the dual problem of \eqref{eq:convex} solves the following concave optimization problem.
\begin{align}\label{eq:opt}
\notag \underset{\Sigma}{\text{argmax}}
& \: \left( \log\det\Sigma\right) \\
 \qquad \text{subject to}& \begin{cases}
 & \Sigma_{i,j}=\Sc_{i,j},\: \:\forall ij\in E\\
 & \Sigma\succ 0,
\end{cases}
\end{align}
which has a unique solution if and only if the feasible set determined by Equation \eqref{eq:opt} is non-empty. Note that the existence of a matrix $\Sigma$ that satisfies the condition given by Equation \eqref{eq:opt} is essentially a positive definite completion problem. For the future reference we record the conclusion as follows.

\begin{Proposition}\label{prop:completion}
Suppose $\x_1,\ldots,\x_n$ are $n$ observations taken from $\N_p(0,\Sigma)\in\mathscr{N}(\G)$. Then the MLE of $\Sigma$ exists if and only if there is a matrix $P\in \rm{S}_{++}(p)$ such that $P_{i,j}=\mathcal{S}_{i,j}$ for every $ij\in E$.
\end{Proposition}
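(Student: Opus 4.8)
The plan is to argue directly from the reformulation \eqref{eq:convex}. By construction, the MLE of $\Sigma$ exists exactly when the strictly convex program \eqref{eq:convex} attains its infimum over the open cone $\PG=\ZG\cap\rm{S}_{++}(p)$; writing $f(\Omega)=-\log\det\Omega+\tr(\Omega\mathcal{S})$ for its objective (dropping the harmless factor $n/2$), I would reduce the proposition to the equivalence: $f$ attains a minimum on $\PG$ if and only if there is $P\in\rm{S}_{++}(p)$ with $P_{i,j}=\mathcal{S}_{i,j}$ for every $ij\in E$. One could instead read this off the quoted duality between \eqref{eq:convex} and \eqref{eq:opt} --- the displayed condition on $P$ is literally the feasibility of \eqref{eq:opt}, whose unique solvability was already tied to nonemptiness of its feasible set --- but I prefer the self-contained route below.

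For the ``if'' direction I would first observe that, since every $\Omega\in\PG$ is supported on $E$ and $E$ contains all self-loops, the value $\tr(\Omega\mathcal{S})$ depends on $\mathcal{S}$ only through its entries indexed by $E$; hence $\tr(\Omega\mathcal{S})=\tr(\Omega P)$, and on $\PG$ we may replace $\mathcal{S}$ by the positive definite matrix $P$ and instead minimize $g(\Omega)=-\log\det\Omega+\tr(\Omega P)$. Because $P\succ 0$, the function $g$ is coercive on $\rm{S}_{++}(p)$: it blows up both as the largest eigenvalue of $\Omega$ grows and as the smallest eigenvalue of $\Omega$ shrinks to $0$, so its sublevel sets are compact subsets of $\rm{S}_{++}(p)$. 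Intersecting with the closed subspace $\ZG$ and applying the Weierstrass theorem, $f$ attains its minimum at some $\hat\Omega\in\PG$, and then $\hat\Sigma=\hat\Omega^{-1}$ is the MLE of $\Sigma$.

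For the ``only if'' direction I would use first-order optimality: if $\hat\Omega\in\PG$ minimizes $f$ over $\PG$ --- an open subset of the linear space $\ZG$ --- then the directional derivative of $f$ at $\hat\Omega$ must vanish along every $Z\in\ZG$, and since $\nabla f(\Omega)=-\Omega^{-1}+\mathcal{S}$ this forces $(-\hat\Omega^{-1}+\mathcal{S})_{i,j}=0$ for all $ij\in E$; consequently $P=\hat\Omega^{-1}\in\rm{S}_{++}(p)$ is the required completion. I expect the one delicate step to be the compactness claim in the ``if'' direction: one has to check that the sublevel sets of $-\log\det\Omega+\tr(\Omega P)$ are closed in $S(p)$ and stay uniformly bounded away from singular matrices, so that a minimizer genuinely exists inside the open cone rather than escaping to its boundary. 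Everything else is the elementary gradient computation together with routine bookkeeping, and strict convexity of $f$ is what lets the statement speak of ``the'' MLE.
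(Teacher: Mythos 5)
Your argument is correct, but it takes a genuinely different route from the paper. The paper does not prove this proposition directly: it passes from \eqref{eq:convex} to its Lagrangian dual \eqref{eq:opt} via a ``standard optimization procedure'' (citing Hastie et al.\ and Dahl et al.), observes that the dual is exactly the maximum-determinant positive definite completion problem whose feasibility is the stated condition on $P$, and records the equivalence as a consequence of strong duality and dual attainment. You instead work entirely on the primal side: for the ``if'' direction you note that $\tr(\Omega\Sc)=\tr(\Omega P)$ for all $\Omega\in\PG$ because $\Omega$ is supported on $E$ (self-loops included), and then use coercivity of $\Omega\mapsto -\log\det\Omega+\tr(\Omega P)$ with $P\succ 0$ --- the sublevel-set bound $\lambda_{\min}(P)\lambda_j(\Omega)-\log\lambda_j(\Omega)\le c'$ pins every eigenvalue of $\Omega$ into a compact interval bounded away from $0$, so the delicate compactness step you flag does go through and Weierstrass yields a minimizer in the open cone; for the ``only if'' direction the first-order condition along $\ZG$ forces $\hat\Omega^{-1}$ to agree with $\Sc$ on $E$, so $P=\hat\Omega^{-1}$ is the required completion. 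Your proof is self-contained and in fact delivers slightly more (it identifies the completion as $\hat\Sigma=\hat\Omega^{-1}$ itself, i.e.\ the classical characterization of the MLE), at the cost of the eigenvalue estimate; the paper's route is shorter but outsources both the duality derivation and the attainment argument to the cited references. Either is acceptable here.
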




\section{ The Gaussian rank of a graph}\label{sec:grank} In this section we give another description for the Gaussian rank of a graph using its unique property with respect to the positive definite completion problem given by Equation \eqref{eq:opt}. An advantage of this alternative description is that more easily can be verified and thus used to derive the properties of the Gaussian rank as a graph parameter.

\subsection{ An alternative description of the Gaussian rank} Let $\G=(V, E)$ be a graph. Two matrices $A, B\in\R^{p\times p}$ are said to match on $\G$, denoted by $A\match{\G} B$,  if $A_{i,j}=B_{i,j}$ for each $ij\in E$. In other words, two matrices match on $\G$ if their Euclidean projection onto $\mathrm{Z}_{\G}$ are identical. The matching relation $\match{\G}$ is transitive, and invariant under scaling \cite{Buhl93}. More precisely,
\begin{eqnarray}
\label{eq:trans} A\match{\G} B &\text{and}&U\match{\G} V \implies A+U\match{\G} B+V\\
\label{eq:scale}  A\match{\G} B&\implies& D A D\match{\G} D B D, \: \text{for every diagonal matrix $D$.}
\end{eqnarray}

 Let us recall that by definition $\rb(\G)$ is the smallest number $r$ with the property that for every observations $\x_1,\ldots, \x_{r}$ if the sample covariance matrix $\mathcal{S}$ is of rank $r$ and in general position, then the MLE exists. In light of Proposition \ref{prop:completion} an alternative description of $\rb(\G)$ can be given as follows.
\begin{Proposition}\label{prop:alt}
Let $\G$ be a graph. Then $\rb(\G)$ is the smallest number $r$ with the following property:\\
$(\star)\quad\forall A\in \rm{S}_{+}(p,r)$, if $A$ is in general position $\implies \exists P\in \PD{p}$ such that $P\match{\G}A$.
\end{Proposition}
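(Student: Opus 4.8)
The plan is to show the two-way equivalence between the defining property of $\rb(\G)$ (via D\&L problem (II) and Proposition \ref{prop:completion}) and property $(\star)$. The bridge between them is the correspondence between generic observations and in general-position positive semidefinite matrices of low rank, which was essentially set up in Lemma \ref{lem:equiv} and Corollary \ref{cor:cor1}. First I would unwind the definition: by the D\&L problem (II), $\rb(\G)$ is the smallest $r$ such that for every $n\ge r$ and every generic observations $\x_1,\dots,\x_n$ the MLE exists; and by Proposition \ref{prop:completion} the MLE exists precisely when the sample covariance matrix $\mathcal S$ admits a positive definite completion over $\G$, i.e. there is $P\in\PD p$ with $P\match{\G}\mathcal S$. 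So it suffices to translate ``generic observations, for all $n\ge r$'' into ``all in general-position $A\in\rm{S}_+(p,r)$''.

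The forward direction ($\rb(\G)$ has property $(\star)$ for $r=\rb(\G)$): given $A\in\rm{S}_+(p,r)$ in general position, by Lemma \ref{lem:equiv} write $A=(\vb_i^\top\vb_j)_{i,j}$ with $\vb_1,\dots,\vb_p\in\R^r$ in general position; stacking the $\vb_i^\top$ as rows of a data matrix $\mathcal X\in\R^{r\times p}$ exhibits $A$ (up to the harmless factor $1/r$, which by \eqref{eq:scale} does not affect the existence of a positive definite completion) as the sample covariance matrix of $r$ observations whose data matrix has columns in general position, hence generic observations; since $r=\rb(\G)$, the MLE exists, so by Proposition \ref{prop:completion} the desired $P\succ 0$ matching $A$ on $\G$ exists. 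For the converse direction I need the minimality: I must check both that if $r$ satisfies $(\star)$ then $r\ge\rb(\G)$, i.e. every generic sample of size $\ge r$ admits the completion, and that $\rb(\G)$ itself is no smaller than the least such $r$. For a generic sample of size exactly $r$, Corollary \ref{cor:cor1} (together with Lemma \ref{lem:equiv}) says $\mathcal S\in\rm{S}_+(p,r)$ is in general position (generically $\mathcal S$ has full rank $r$), so $(\star)$ gives the completion; for samples of size $n>r$ one passes from $\rm{S}_+(p,n)$ down to rank $r$ using part (a) of Lemma \ref{lem:ext} repeatedly — each application replaces an in general-position matrix in $\rm{S}_+(p,d)$ by one in $\rm{S}_+(p,d)$ of the form $A-\w\w^\top$ while preserving the match on $\G$ in the relevant bordered sense — or more directly one observes that a positive definite completion of a higher-rank $\mathcal S$ follows a fortiori, so the smallest $r$ with $(\star)$ is $\le\rb(\G)$.

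The step I expect to be the main obstacle is the careful handling of the ``for all $n\ge r$'' quantifier hidden in the D\&L definition versus the ``rank exactly $r$'' phrasing of $(\star)$: I must be sure that property $(\star)$ at the single rank $r$ genuinely forces the MLE to exist for generic samples of every size $\ge r$, not merely of size $r$. The clean way to see this is monotonicity of the positive definite completion problem in the sense captured by Lemma \ref{lem:ext}(a): if $\mathcal S'\in\rm{S}_+(p,n)$ with $n>r$ is in general position, I would exhibit an in general-position $A\in\rm{S}_+(p,r)$ with $A\match{\G}\mathcal S'$ — obtained by subtracting rank-one terms $\w\w^\top$ as in Lemma \ref{lem:ext}, chosen so as not to disturb the $\G$-entries — and then a positive definite $P\match{\G}A\match{\G}\mathcal S'$ supplied by $(\star)$, using transitivity \eqref{eq:trans} of $\match{\G}$, completes the argument. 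Once this monotonicity point is pinned down, the rest is bookkeeping with the already-established lemmas.
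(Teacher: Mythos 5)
Your core translation is exactly what the paper intends: the paper offers no explicit proof of this proposition, treating it as immediate from Proposition \ref{prop:completion} together with Lemma \ref{lem:equiv} and Corollary \ref{cor:cor1} (up to the harmless scalar $1/r$, handled by \eqref{eq:scale}, an in general-position matrix in $\rm{S}_{+}(p,r)$ is precisely the sample covariance matrix of $r$ generic observations, and the MLE exists if and only if a positive definite $\G$-completion exists). Your two directions at rank exactly $r$ are correct and match this reading.

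The step you yourself flag as the main obstacle --- forcing existence of the MLE for generic samples of every size $n>r$ --- is where your mechanism fails. First, note that the paper does not fold this into Proposition \ref{prop:alt} at all: it recalls the definition of $\rb(\G)$ in the ``exactly $r$ observations'' form and defers the monotonicity in $n$ to Proposition \ref{prop:ge}, so strictly you are proving more than is asked. More importantly, the device you propose does not work. Lemma \ref{lem:ext}(a) produces $A-\w\w^{\top}\in\rm{S}_{+}(p,d)$ of the \emph{same} rank $d$ (it is a bordering tool, not a rank-lowering one), so repeated application cannot pass from rank $n$ down to rank $r$. And there is no nonzero rank-one correction that leaves the $\G$-entries undisturbed: since $E$ contains all self-loops, $A\match{\G}A-\w\w^{\top}$ forces $\w_i^{2}=0$ for every $i$, i.e.\ $\w=0$; hence you cannot exhibit a lower-rank $A$ with $A\match{\G}\mathcal{S}'$. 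The correct argument is the one in Proposition \ref{prop:ge}: write $\mathcal{S}'=B+C$ with $B=\sum_{i=1}^{r}\w_i\w_i^{\top}\in\rm{S}_{+}(p,r)$ in general position and $C\succeq 0$; then $(\star)$ supplies $Q\succ 0$ with $Q\match{\G}B$, and $Q+C\succ 0$ matches $B+C=\mathcal{S}'$ by \eqref{eq:trans}. Here $B$ does \emph{not} match $\mathcal{S}'$ on $\G$ --- the completion of the low-rank piece is corrected by adding $C$ afterwards --- which is the structural point your ``a fortiori'' gloss misses.
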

\begin{Remark}\label{rem:weak}
 Similarly, $\ra(\G)$ is the smallest number $r$ with the following property:\\
Almost surely $\forall A\in \rm{S}_{+}(p,r)$, if $A$ is in general position $\implies \exists P\in \PD{p}$ such that $P\match{\G}A$.
\end{Remark}

\subsection{ Some basic properties of the Gaussian rank}\label{subsec:basic}
In this subsection we list some basic properties of the Gaussian rank as a graph parameter.  Let us keep in mind that $\rb(\G)\le r$, for some positive integer $r$ if the condition ($\star$) in Proposition \ref{prop:alt} is satisfied. Also, because of the scale invariant property of matching relation given by Equation \eqref{eq:scale}, whenever convenient we can assume that $A\in \rm{S}_{+}(p,r)$ is a correlation matrix, that is, the diagonals of $A$ are all $1$.
\begin{Proposition}\label{prop:ge}
Let $r> \rb(\G)$. Then for every in general-position matrix $A\in \rm{S}_{+}(p,r)$ there is a matrix $P\in \rm{S}_{++}(p)$ such that $P\match{\G} A$.
\end{Proposition}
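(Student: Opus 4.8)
The statement says: if $r > \rb(\G)$, then for every in-general-position matrix $A \in \rm{S}_{+}(p,r)$ there exists $P \in \rm{S}_{++}(p)$ with $P \match{\G} A$. The plan is to reduce the case of rank $r$ to the critical rank $\rb(\G)$ by repeatedly applying Lemma \ref{lem:ext}(a), which lets us peel off a rank-one piece from an in-general-position positive semidefinite matrix while staying in general position.

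First I would set $d = \rb(\G)$ and observe that it suffices to treat the case $r = d+1$; the general case then follows by downward induction on $r$, since each step lowers the rank by one until it reaches $d$. So suppose $A \in \rm{S}_{+}(p, d+1)$ is in general position. By the scale-invariance of $\match{\G}$ (Equation \eqref{eq:scale}) I may assume $A$ is a correlation matrix, so all diagonal entries equal $1$. Now apply Lemma \ref{lem:ext}(a): since $A \in \rm{S}_{+}(p, d+1)$ is in general position, there is a vector $\w \in \R^p$ such that $B := A - \w\w^{\top} \in \rm{S}_{+}(p, d)$ is in general position. (One should check the hypothesis $d+1 \le p$ needed to invoke the lemma — but if $d+1 > p$ then $A \in \rm{S}_{++}(p)$ already, and we may simply take $P = A$, so there is nothing to prove; hence assume $d+1 \le p$.)

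Next, because $B \in \rm{S}_{+}(p,d)$ is in general position and $d = \rb(\G)$, Proposition \ref{prop:alt} (the characterization of $\rb(\G)$ via property $(\star)$) yields a matrix $Q \in \rm{S}_{++}(p)$ with $Q \match{\G} B$. The final step is to transport this back to $A$. We have $A = B + \w\w^{\top}$, and trivially $\w\w^{\top} \match{\G} \w\w^{\top}$, so by the additivity of matching (Equation \eqref{eq:trans}) we get
\[
Q + \w\w^{\top} \;\match{\G}\; B + \w\w^{\top} \;=\; A.
\]
Setting $P := Q + \w\w^{\top}$, we have $P \succ 0$ (a positive definite matrix plus a positive semidefinite matrix is positive definite) and $P \match{\G} A$, which is exactly what we want.

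The only genuinely delicate point is making sure the reduction step respects all the hypotheses: that $B$ genuinely has rank exactly $d$ (not lower) and is genuinely in general position, which is precisely the content of Lemma \ref{lem:ext}(a), so there is no real obstacle there — it is a clean application. The induction bookkeeping (peeling off rank-one pieces one at a time, each time re-normalizing to a correlation matrix if desired, until the rank drops to $\rb(\G)$) is routine. I expect the main thing a careful reader will want spelled out is the boundary case $p \le d$ discussed above, and the verification that the lemma's rank hypothesis $d \le p - 1$ in part (a) — equivalently the requirement that $A$ not already be positive definite — is handled correctly at each step of the induction.
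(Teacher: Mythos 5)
Your overall architecture --- produce an in-general-position matrix $B\in \rm{S}_{+}(p,\rb(\G))$ with $A=B+(\text{PSD remainder})$, complete $B$ to $Q\in\rm{S}_{++}(p)$ with $Q\match{\G}B$, and set $P=Q+(\text{remainder})$ --- is exactly the paper's, and your final assembly step via Equation \eqref{eq:trans} is correct. The problem is the step that is supposed to lower the rank. Lemma \ref{lem:ext}(a) does \emph{not} peel off a rank-one piece in the sense you need: its conclusion is that for $A\in\rm{S}_{+}(p,d)$ in general position there is a $\w$ with $A-\w\w^{\top}\in\rm{S}_{+}(p,d)$ --- the \emph{same} rank $d$. (That is the whole point of that lemma: $\w=A\x$ is chosen with $\x^{\top}A\x<1$ precisely so that subtracting $\w\w^{\top}$ keeps every $d\times d$ principal submatrix positive definite; the lemma is used to border and extend matrices, not to reduce rank.) So your $B:=A-\w\w^{\top}$ lies in $\rm{S}_{+}(p,d+1)$, not $\rm{S}_{+}(p,d)$; the rank never drops and your induction never terminates. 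Repairing this is not cosmetic: to force the rank to drop you would need $\w=A\x$ with $\x^{\top}A\x=1$, and then the inequality $\y_{\alpha}^{\top}A[\alpha]\y_{\alpha}\le\x^{\top}A\x$ from the lemma's proof is no longer strict, so general position of the new, lower-rank matrix (nonsingularity of every $d\times d$ principal submatrix) would require a genuinely new argument.

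The paper avoids this by producing the rank-$\rb(\G)$ core in one shot: it writes $A=\sum_{i=1}^{r}\w_i\w_i^{\top}$ as in Equation \eqref{eq:decomp}, sets $B=\sum_{i=1}^{\rb(\G)}\w_i\w_i^{\top}$ and $C=\sum_{i=\rb(\G)+1}^{r}\w_i\w_i^{\top}$, and asserts that $B$ inherits general position from $A$; then $Q\match{\G}B$ gives $P=Q+C\match{\G}A$ with $P\succ0$. If you want to keep a one-rank-at-a-time scheme, you must first state and prove a separate rank-reduction lemma that preserves general position; as written, the citation of Lemma \ref{lem:ext}(a) does not support the step on which everything else rests.
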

\begin{proof}
Let $A=\sum_{i=1}^{r}\w_{i}\w_i^{\top}$ be a decomposition of $A$ as in Equation \eqref{eq:decomp}. Let us set $B=\sum_{i=1}^{\rb(\G)}\w_{i}\w_i^{\top}$ and $C=\sum_{i=\rb(\G)+1}^{r}\w_{i}\w_i^{\top}$. Since $A$ is in general position the matrix $B\in \rm{S}_{+}(p, \rb(\G))$ is also in general position. Thus there is a matrix $Q\in \rm{S}_{++}(p)$ such that $Q\match{\G} B$. Equation \eqref{eq:trans} now shows
$A=B+C\match{\G} Q+C\in \rm{S}_{++}(p)$ as desired.
\end{proof}
\noindent In relation to the D\&L problem (II) Proposition \ref{prop:ge} is an important property that intuitively seems obvious. The reason is that if the MLE exists for every generic $\rb(\G)$ observations, then we expect this to be true for every generic $r\ge \rb(\G)$ observations as well.
\begin{Proposition}\label{prop:-v} Let $v\in V$ be a vertex of $\G$. Then the following holds.
\begin{enumerate}
\item[(a)] $\rb(G-v)\leq \rb(\G)\leq \rb(\G-v)+1$.
\item[(b)] $\rb(\G)=\rb(\G-v)+1$ if  $v$ is adjacent to all other vertices.
\end{enumerate}
\end{Proposition}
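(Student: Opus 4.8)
The plan is to prove the two inequalities in part (a) separately, using the criterion $(\star)$ of Proposition \ref{prop:alt}, and then to deduce part (b) from part (a) together with a Schur‑complement reduction available precisely because $v$ is adjacent to every other vertex. Throughout write $\alpha=V\setminus\{v\}$, so $\G-v=\G[\alpha]$ and $E(\G-v)=\{ij\in E:\ i,j\in\alpha\}$, and recall that every graph $\Hc$ satisfies $\rb(\Hc)\le |V(\Hc)|$ (a full‑rank in‑general‑position matrix is already positive definite).

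\emph{Proof that $\rb(\G-v)\le\rb(\G)$.} Put $r=\rb(\G)$; we may assume $r\le p-1$, since otherwise $\rb(\G-v)\le p-1<r$. Let $A'\in\PSD{p-1,r}$ be in general position. By Lemma \ref{lem:ext}(a) there is $\w\in\R^{p-1}$ such that, adjoining a row and column for $v$,
\[
A=\begin{pmatrix}1&\w^{\top}\\ \w&A'\end{pmatrix}\in\PSD{p,r}
\]
is in general position and $A[\alpha]=A'$. By $(\star)$ for $\G$ there is $P\in\PD{p}$ with $P\match{\G}A$; then $P[\alpha]\in\PD{p-1}$, and since $E(\G-v)\subseteq E(\G)$ we get $P[\alpha]\match{\G-v}A'$. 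Hence $(\star)$ holds for $\G-v$ at $r$, so $\rb(\G-v)\le r$.

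\emph{Proof that $\rb(\G)\le\rb(\G-v)+1$.} Put $s=\rb(\G-v)$ and let $A\in\PSD{p,s+1}$ be in general position; by scale invariance \eqref{eq:scale} we may take $A_{vv}=1$. Let $B=A[\alpha]-A[\alpha,v]A[v,\alpha]=A[\alpha|\{v\}]$ be the Schur complement of $A[\{v\}]$. By Guttman rank additivity $\rk(B)=\rk(A)-\rk(A[\{v\}])=s$, and $B$ is in general position because $\det B[\beta]=\det A[\beta|\{v\}]=\det A[\beta\cup\{v\}]\neq0$ for every $\beta\subseteq\alpha$ with $|\beta|=s$. Since $\rk(B)=s=\rb(\G-v)$, the criterion $(\star)$ for $\G-v$ furnishes $Q_0\in\PD{p-1}$ with $Q_0\match{\G-v}B$. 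Now set
\[
P=\begin{pmatrix}Q_0+A[\alpha,v]A[v,\alpha]&A[\alpha,v]\\ A[v,\alpha]&1\end{pmatrix}.
\]
The Schur complement of the $(v,v)$‑entry of $P$ equals $Q_0\succ0$, so $P\in\PD{p}$; and $P\match{\G}A$, since $P_{ij}=(Q_0)_{ij}+A_{iv}A_{jv}=B_{ij}+A_{iv}A_{jv}=A_{ij}$ for $ij\in E(\G-v)$, $P_{iv}=A_{iv}$ for $iv\in E(\G)$, and $P_{vv}=1=A_{vv}$. Thus $(\star)$ holds for $\G$ at $s+1$, i.e.\ $\rb(\G)\le s+1$. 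I expect this to be the crux: the naive approach of first completing $A[\alpha]$ for $\G-v$ (whose rank is $s+1\ge s$) and then adjoining a row for $v$ fails, because the prescribed entries $A[\cdot,v]$ along a neighbourhood of $v$ that is not a clique cannot in general be realised together with $P_{vv}=1$; applying the hypothesis $\rb(\G-v)=s$ instead to the \emph{rank‑}$s$ matrix $B$ and adding the peeled‑off rank‑one piece $A[\alpha,v]A[v,\alpha]$ back at the end removes that obstruction automatically.

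\emph{Proof of (b).} Assume $v$ is adjacent to every other vertex; by part (a) it suffices to show $\rb(\G)\ge\rb(\G-v)+1$, i.e.\ that $(\star)$ fails for $\G$ at $r=\rb(\G-v)=:s$. Since $v$ is universal, any positive definite completion $P$ of a matrix $A\in\PSD{p,s}$ with respect to $\G$ has $P[\alpha,v]=A[\alpha,v]$ and $P_{vv}=A_{vv}$ forced; by the Schur‑complement criterion (using $A_{vv}>0$, which holds by general position) $P\succ0$ is equivalent to $P[\alpha]-A[\alpha,v]A_{vv}^{-1}A[v,\alpha]\succ0$, and the latter matrix is a positive definite completion of $A[\alpha|\{v\}]$ for $\G-v$; conversely such a completion reconstructs $P$. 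Hence $A$ is completable for $\G$ iff $A[\alpha|\{v\}]\in\PSD{p-1,s-1}$ — which, by the same reasoning as above, is in general position — is completable for $\G-v$. Since $\rb(\G-v)=s$ is minimal, $s-1$ fails $(\star)$ for $\G-v$: for $s\ge2$ there is an in‑general‑position $A'\in\PSD{p-1,s-1}$ with no positive definite completion for $\G-v$, and for generic $\w\in\R^{p-1}$ the matrix
\[
A=\begin{pmatrix}A'+\w\w^{\top}&\w\\ \w^{\top}&1\end{pmatrix}
\]
lies in $\PSD{p,s}$, is in general position (the only minors that are not automatic are $\det A[\gamma]=\det(A'[\gamma]+\w[\gamma]\w[\gamma]^{\top})$ for $\gamma\subseteq\alpha$, $|\gamma|=s$, which are nonzero once $\w[\gamma]\notin\texttt{Range}\,A'[\gamma]$, a proper subspace), and satisfies $A[\alpha|\{v\}]=A'$; for $s=1$ take instead $A=\v\v^{\top}$ with all $v_i\neq0$, so that $A[\alpha|\{v\}]$ is the zero matrix. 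In either case $A[\alpha|\{v\}]$ has no positive definite completion for $\G-v$, hence neither does $A$ for $\G$, so $(\star)$ fails for $\G$ at $s$ and $\rb(\G)=\rb(\G-v)+1$.
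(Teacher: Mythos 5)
Your proof is correct, and both inequalities of part (a) follow essentially the paper's own argument: the first borders an in-general-position matrix via Lemma \ref{lem:ext}(a) and restricts the completion, and the second peels off the rank-one piece $A[\alpha,v]A[v,\alpha]$ (the Schur complement at $v$), completes the resulting in-general-position rank-$s$ matrix for $\G-v$, and adds the rank-one piece back — exactly the paper's $C=B-\u\u^{\top}$ computation. For part (b) you run the same Schur-complement correspondence in the contrapositive direction: the paper fixes $r=\rb(\G)$ and shows every in-general-position $A'\in \mathrm{S}_{+}(p-1,r-1)$ is completable for $\G-v$ by lifting it to an in-general-position rank-$r$ matrix on $\G$ via Lemma \ref{lem:ext}(b), whereas you take a rank-$(s-1)$ witness of non-completability for $\G-v$ and lift it, by exactly the rank-one update $A'+\w\w^{\top}$ with generic $\w$ that underlies Lemma \ref{lem:ext}(b), to a rank-$s$ witness of non-completability for $\G$. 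The two packagings are logically equivalent and rely on the same universality of $v$ to force the bordering row of any completion; yours has the mild virtue of exhibiting an explicit obstruction, at the cost of the separate treatment of $s=1$ (where no rank-$0$ witness exists), a degenerate case the paper's direction absorbs silently. Your parenthetical explanation of why the ``naive'' completion-then-bordering order fails is accurate and is precisely the point of the paper's ordering as well.
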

 \begin{proof} First, without loss of generality we assume that $p\ge 2$ and $v=1$.\\
\noindent (a)~  Let us set $r=\rb(\G)$. The claim is that $\rb(\G-v)\le r$. Since $\rb(\G-v)\le p-1$ it suffices to consider the case $r\le p-1$. Let $A\in \rm{S}_{+}(p-1,r)$ be in general position.  By Part (a) in Lemma \ref{lem:ext} there is a vector $\w\in\R^{p-1}$ such that
\[
B=
\begin{pmatrix}
	1&\w^{\top}\\
	\w&A
\end{pmatrix}\in \rm{S}_{+}(p,r) \: \text{is in general position.}
\]
Let $P\in \rm{S}_{++}(p)$ such that $P\match{\G} B$. If we set $\alpha=V(\G-v)$, then it is clear that $A\match{\G-v}P[\alpha]\in \rm{S}_{++}(p-1)$.\\

\noindent Now we show that $\rb(\G)\le \rb(\G-v)+1$. Let us set $r_0=r(\G-v)$ and let $A\in \rm{S}_{+}(p,r_0+1)$ be in general position. We assume without loss of generality that $A$ is a correlation matrix. If we partition $A$ as
\[
A=\begin{pmatrix}
	1& \u^{\top}\\
	\u& B
\end{pmatrix}, \: \text{then $C=B-\u\u^{\top}\succeq 0$.}
\]
By using the Guttman rank additivity formula in \cite{Z5} and the fact that $C[\alpha]=B[\alpha]-\u[\alpha]\u[\alpha]^{\top}$, for each $\alpha\subseteq V$, we conclude that $C\in \rm{S}_{+}(p-1,r_0)$ is in general position. Therefore, there is a matrix $Q\in \rm{S}_{++}(p-1)$ such that $Q\match{\G-v} C$. Let us set
\[
P=\begin{pmatrix}
	1&\u^{\top}\\
	\u&Q+\u\u^{\top}
\end{pmatrix}\in \rm{S}_{++}(p).
\]
The transitive property of the matching relation given by Equation \eqref{eq:trans} now shows that $Q+\u\u^{\top}\match{\G-v} B$ and therefore $P\match{\G} A$. \\

\noindent (b)~ Suppose $v=1$ is adjacent to $2,\ldots, p$. As before, let us set $r=\rb(\G)$. By Part (a) above $\rb(\G-v)\ge r-1$. Thus, it suffices to show that $\rb(\G-v)\le r-1$. For this, let $A\in \rm{S}_{+}(p-1,r-1)$. Clearly, it suffices to consider the case $r\le p-1$. By Part (b) of Lemma \ref{lem:ext} there is a vector $\u\in \R^{p-1}$ such that the matrix
\[
\begin{pmatrix}
	1&\u^{\top}\\
	\u& A+\u\u^{\top}
\end{pmatrix}\in \rm{S}_{+}(p,r) \: \text{is in general position.}
\]
Therefore, there is a matrix $P\in \rm{S}_{++}(p)$ such that
\[
P=\begin{pmatrix}
	1&\u^{\top}\\
	\u& Q
\end{pmatrix}\match{\G}
\begin{pmatrix}
	1&\u^{\top}\\
	\u& A+\u\u^{\top}
\end{pmatrix}, \: \text{for some matrix $Q\in \rm{S}_{++}(p-1)$.}
\]
Thus, $A+\u\u^{\top}\match{\G-v} Q$ and consequently $A\match{\G-v} Q-\u\u^{\top}\in \rm{S}_{++}(p-1)$.
 \end{proof}

\begin{Corollary}\label{cor:sub}
Let $\Hc$ be a subgraph of $\G$. Then $\rb(\Hc)\le \rb(\G)$.
\end{Corollary}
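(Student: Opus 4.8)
The plan is to factor the passage from $\G$ to a subgraph $\Hc\subseteq\G$ into two elementary moves, deleting a single vertex and deleting a single edge, and to check that $\rb$ does not increase under either move.

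\emph{Vertex deletion.} If $v\in V(\G)$, then the first inequality of Proposition \ref{prop:-v}(a) already gives $\rb(\G-v)\le\rb(\G)$. Applying this once for each vertex in $V(\G)\setminus V(\Hc)$, in any order, and noting that deleting these vertices one at a time produces exactly the induced subgraph $\G[V(\Hc)]$ (here one uses the standing convention that every graph carries all its self-loops, so no self-loop on a retained vertex is ever lost), I obtain $\rb(\G[V(\Hc)])\le\rb(\G)$.

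\emph{Edge deletion.} This step is immediate from the alternative description in Proposition \ref{prop:alt}. Since $V(\Hc)=V(\G[V(\Hc)])$ and $E(\Hc)\subseteq E\cap(V(\Hc)\times V(\Hc))=E(\G[V(\Hc)])$, the graph $\Hc$ is a spanning subgraph of $\G':=\G[V(\Hc)]$. Put $q=|V(\Hc)|$ and $r=\rb(\G')$. Given any in general-position matrix $A\in\rm{S}_{+}(q,r)$, condition $(\star)$ of Proposition \ref{prop:alt} for $\G'$ furnishes $P\in\rm{S}_{++}(q)$ with $P_{ij}=A_{ij}$ for all $ij\in E(\G')$; because $E(\Hc)\subseteq E(\G')$ this in particular yields $P\match{\Hc}A$. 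Hence $(\star)$ holds for $\Hc$ at level $r$, so $\rb(\Hc)\le r=\rb(\G')$. Combining the two steps gives $\rb(\Hc)\le\rb(\G[V(\Hc)])\le\rb(\G)$.

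There is essentially no hard step here: all the real work was carried out in Proposition \ref{prop:-v}(a), and the edge-deletion step uses only the trivial observation that a matching constraint imposed by $\G'$ is stronger than the one imposed by a spanning subgraph of $\G'$. (Alternatively, one could give a one-shot argument: extend an in general-position $A\in\rm{S}_{+}(q,r)$ with $r=\rb(\G)$ to an in general-position matrix $\tilde A\in\rm{S}_{+}(p,r)$ having $A$ as a principal submatrix by iterating Lemma \ref{lem:ext}(a), apply $(\star)$ for $\G$ to $\tilde A$, and restrict the resulting completion to the rows and columns indexed by $V(\Hc)$; but the two-move argument above is cleaner.) The only points requiring care are bookkeeping ones around the self-loop convention and the verification that the iterated vertex deletions reproduce precisely the induced subgraph $\G[V(\Hc)]$.
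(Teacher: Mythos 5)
Your proposal is correct and follows the same route as the paper: reduce to single vertex deletions handled by Proposition \ref{prop:-v}(a) and single edge deletions handled by the condition $(\star)$ of Proposition \ref{prop:alt}. Your write-up simply spells out the edge-deletion step and the self-loop bookkeeping that the paper dismisses as obvious.
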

\begin{proof}
Every subgraph of $\G$ is obtained by removing successively a finite number of vertices and edges from $\G$. Therefore, it suffices to show that $\rb(\G-v)\le \rb(\G)$, for each vertex $v\in V$, and $\rb(\G-e)\le \rb(\G)$, for each edge $e\in E$. The latter is obvious using the condition $(\star)$ in Proposition \ref{prop:alt}.
\end{proof}

A graph $\G$ is said to be the clique sum of two subgraphs $\G_1=(V_1,E_1)$ and $\G_2=(V_2, E_2)$ if  $V=V_1\cup V_2$, $E=E_1\cup E_2$ and $G[V_1\cap V_2]$ is a complete graph (including the empty graph). We write this as $
\G=\G_1\underset{V_1\cap V_2}{\oplus} \G_2$. The following proposition is now immediate using a standard completion process given by \cite{G84} or \cite{L96}.
\begin{Proposition}\label{prop:2}
Suppose $\G=\G_1\underset{V_1\cap V_2}{\oplus} \G_2$. Then $\rb(\G)=\max\{\rb(\G_1),\rb(\G_2)\}$. In particular, if  $\G_1,\ldots,\G_k$ are all the connected components of $\G$, then
\[
\rb(\G)=\max\left\{\rb(\G_i): i=1,\ldots,k\right\}.
\]
\end{Proposition}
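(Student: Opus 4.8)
The plan is to prove $\rb(\G)=\max\{\rb(\G_1),\rb(\G_2)\}$ by establishing the two inequalities separately, using the alternative description $(\star)$ from Proposition \ref{prop:alt} together with the transitivity and scale-invariance of the matching relation. Write $V_0=V_1\cap V_2$ and note that $G[V_0]$ is complete, so the matching relation forces agreement on \emph{all} pairs within $V_0$.

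\textbf{The easy inequality.} Since $\G_1$ and $\G_2$ are both subgraphs of $\G$, Corollary \ref{cor:sub} immediately gives $\rb(\G_i)\le \rb(\G)$ for $i=1,2$, hence $\max\{\rb(\G_1),\rb(\G_2)\}\le \rb(\G)$. This requires no work.

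\textbf{The reverse inequality.} Set $r=\max\{\rb(\G_1),\rb(\G_2)\}$; I want to check condition $(\star)$ for $\G$ with this $r$. Let $A\in\rm{S}_+(p,r)$ be in general position, and (by scale invariance) assume $A$ is a correlation matrix. Restricting to the index sets $V_1$ and $V_2$: the principal submatrices $A[V_1]$ and $A[V_2]$ are still positive semidefinite and in general position (every $k\times k$ principal submatrix of $A[V_i]$ is a principal submatrix of $A$), though their ranks may have dropped to some $r_i\le r$. By Proposition \ref{prop:ge} (applied when $r_i<\rb(\G_i)$, noting $r\ge\rb(\G_i)$) together with Proposition \ref{prop:alt} (when $r_i=\rb(\G_i)$), in either case there exist $P_i\in\rm{S}_{++}(V_i)$ with $P_i\overset{\G_i}{=}A[V_i]$. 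In particular $P_1$ and $P_2$ agree with $A$, hence with each other, on every entry indexed by $V_0\times V_0$ (here is where completeness of $G[V_0]$ is essential: it means $\overset{\G_i}{=}$ pins down the entire block $P_i[V_0]$). Now one invokes the standard positive definite completion result for clique sums --- as cited, \cite{G84} or \cite[Appendix or Prop.~on clique sums]{L96} --- which states that two overlapping positive definite matrices that agree on the common principal submatrix, the overlap being a clique, can be glued into a single positive definite matrix $P\in\rm{S}_{++}(p)$ with $P[V_1]=P_1$ and $P[V_2]=P_2$ (the unspecified entries of $P$, namely those between $V_1\setminus V_0$ and $V_2\setminus V_0$, are filled in by the completion). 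By construction $P\overset{\G}{=}A$, since every edge of $\G$ lies in $E_1$ or $E_2$ and $P$ restricts correctly to each $P_i$. This verifies $(\star)$, so $\rb(\G)\le r$.

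\textbf{Main obstacle and the final clause.} The one genuinely substantive ingredient is the positive definite clique-sum completion lemma; everything else is bookkeeping with the matching relation. One should make sure the version invoked produces a completion that is positive \emph{definite} (not merely semidefinite) when the pieces are, which is exactly the classical statement for completion along a chordal/clique structure. For the ``in particular'' clause: if $\G_1,\ldots,\G_k$ are the connected components, then $\G$ is the iterated clique sum of the $\G_i$ along empty intersections ($V_i\cap V_j=\emptyset$, which is a complete graph), so applying the first part inductively gives $\rb(\G)=\max_i\rb(\G_i)$.
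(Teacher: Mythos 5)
Your proof is correct and is precisely the ``standard completion process'' that the paper invokes from \cite{G84} and \cite{L96} without spelling it out: the subgraph inequality via Corollary \ref{cor:sub}, positive definite completion of each block $A[V_i]$ via Propositions \ref{prop:alt} and \ref{prop:ge}, forced agreement on the overlap because $\G[V_1\cap V_2]$ is complete (which does require reading $\G_1,\G_2$ as the induced subgraphs, the standard convention), and the classical clique-sum gluing of two positive definite matrices that agree on a common complete margin. One small slip in the bookkeeping: Proposition \ref{prop:ge} covers the case $r_i>\rb(\G_i)$, not $r_i<\rb(\G_i)$, and the only other case that actually occurs is $|V_i|\le r$, where $A[V_i]$ is already positive definite and needs no completion at all.
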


\begin{Remark}
 In relation to the D$\&$L problem (I) we can show (by slightly modifying our proofs and using the alternative description of $\ra(\G)$ given by Remark \ref{rem:weak}) that the properties of the Gaussian rank discussed in this subsection are also valid properties of the weak Gaussian rank. This fact however will not be used in the paper.
\end{Remark}
\section{The proof of Theorem \ref{thm:main}}\label{sec:main} In this section we prove Theorem \ref{thm:main}, that is, the bounds given by Equation \eqref{eq:main}. The lower and the upper bounds for $\rb(\G)$ are proved in separate subsections.

\subsection{ The upper bound: $\rb(\G)\le \delta^*(\G)+1$} First we state and prove the following key lemma.
\begin{Lemma}\label{lem:key}
Let $v\in V$ be a vertex of $\G$. If $\rb(\G-v)\ge \textrm{deg}_{\G}(v)+1$, then $\rb(\G)=\rb(\G-v)$.
\end{Lemma}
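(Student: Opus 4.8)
The plan is to prove the equality $\rb(\G)=\rb(\G-v)$ by establishing the two inequalities separately. The inequality $\rb(\G-v)\le \rb(\G)$ is already available from Proposition \ref{prop:-v}(a) (or Corollary \ref{cor:sub}), so the entire content of the lemma is the reverse inequality $\rb(\G)\le \rb(\G-v)$. Since Proposition \ref{prop:-v}(a) only gives $\rb(\G)\le \rb(\G-v)+1$ in general, the hypothesis $\rb(\G-v)\ge \deg_{\G}(v)+1$ must be used to shave off the extra $+1$. Throughout I would write $r_0=\rb(\G-v)$, assume without loss of generality $v=1$, and denote by $N=\mathrm{ne}(v)\setminus\{v\}$ the set of neighbors of $v$ other than itself, so $|N|=\deg_{\G}(v)$ and $|N|\le r_0-1$.

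To prove $\rb(\G)\le r_0$, by Proposition \ref{prop:alt} it suffices to take an arbitrary in general-position $A\in \rm{S}_{+}(p,r_0)$ — which we may assume is a correlation matrix — and produce $P\in \rm{S}_{++}(p)$ with $P\match{\G}A$. Partition $A$ as $A=\begin{pmatrix} 1 & \u^{\top}\\ \u & B\end{pmatrix}$ where $B=A[\{2,\dots,p\}]$. As in the proof of Proposition \ref{prop:-v}(a), the Guttman rank additivity formula together with Theorem 1.20 in \cite{Z5} shows $C:=B-\u\u^{\top}\in \rm{S}_{+}(p-1,r_0-1)$ and is in general position (each principal submatrix $C[\alpha]=B[\alpha]-\u[\alpha]\u[\alpha]^{\top}$ is positive definite of full rank $|\alpha|$, for $|\alpha|\le r_0-1$).

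The key new idea is that the constraint graph $\G-v$ only sees the entries of $B$ indexed by its edges, and the edges of $\G$ incident to $v$ only involve $\u[N]$. So I want to replace $\u$ by a carefully chosen vector that keeps $\u[N]$ fixed (to preserve matching with $A$ on edges at $v$) but makes the completion feasible. Concretely, since $C$ is in general position of rank $r_0-1\le p-2$, Part (b) of Lemma \ref{lem:ext} (applied possibly several times, or its proof) lets me perturb within $\rm{S}_+(p-1,\cdot)$; more to the point, I would argue that there is a rank-one bump or a matrix $Q\in \rm{S}_{++}(p-1)$ with $Q\match{\G-v}C$ coming from $\rb(\G-v)=r_0$, and then show the block matrix $\begin{pmatrix} 1 & \u^{\top}\\ \u & Q+\u\u^{\top}\end{pmatrix}$ works. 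The subtlety that consumes the hypothesis $r_0\ge |N|+1$ is this: to invoke $\rb(\G-v)=r_0$ on a rank-$r_0$ matrix rather than rank $r_0-1$, I should instead directly build an in general-position matrix in $\rm{S}_+(p-1,r_0)$ whose $\G-v$-entries agree with those of $B$, which requires that adding one to the rank of $C$ can be done by a rank-one update $\u'\u'^{\top}$ chosen so that $(C+\u'\u'^{\top})[N\cup\{i\}]$ stays consistent — here freedom in the $\le r_0-1=\dim$ coordinates is exactly what lets the neighbor-constraints at $v$ be absorbed. Then apply $\rb(\G-v)\le r_0$ to get $Q\in\rm{S}_{++}(p-1)$ matching it, and reassemble the $p\times p$ positive definite completion of $A$ using transitivity \eqref{eq:trans} and a Schur-complement/Guttman argument as in Proposition \ref{prop:-v}.

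The main obstacle I anticipate is precisely the bookkeeping in that last reassembly: making sure that when I lift from the $(p-1)\times(p-1)$ completion $Q$ back to a $p\times p$ positive definite matrix, the new matrix genuinely matches $A$ on \emph{all} edges of $\G$, including the self-loop at $v$, the edges $vi$ for $i\in N$, and all edges of $\G-v$ simultaneously — and that positive definiteness is preserved, which is where $\u[N]$ being forced (rather than free) interacts with the general-position hypothesis via the inequality $\x^\top C \x < 1$-type condition that appeared in Lemma \ref{lem:ext}(a) and Remark \ref{rem:ext}. I expect the degree hypothesis $\deg_\G(v)+1\le r_0$ to enter exactly at the point where one needs enough ``unconstrained directions'' among the coordinates of the representing vectors $\vb_i\in\R^{r_0}$ of $A$ to simultaneously (i) keep the inner products $\vb_1^\top\vb_i$ for $i\in N$ fixed and (ii) still have the remaining inner products free enough to run the completion of $\G-v$.
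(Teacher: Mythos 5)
Your reduction to the single inequality $\rb(\G)\le \rb(\G-v)$ and your diagnosis of where the hypothesis $\rb(\G-v)\ge\deg_{\G}(v)+1$ must enter are both correct, but the argument stops exactly at the crucial construction, and the route you sketch toward it does not lead there. The detour through $C=B-\u\u^{\top}$ is the problem: $C$ has rank $r_0-1<r_0=\rb(\G-v)$, so nothing entitles you to a positive definite $Q$ with $Q\match{\G-v}C$, and ``adding one to the rank of $C$ by a rank-one update $\u'\u'^{\top}$ so that $(C+\u'\u'^{\top})[N\cup\{i\}]$ stays consistent'' is not a defined construction --- consistent with what, and why should such a $\u'$ exist? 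What you actually need (and what the paper builds directly, never forming $C$) is a single vector $\w\in\R^{p-1}$ with two properties: (i) $\w$ agrees with $\u$ on $\mathrm{ne}(v)$, and (ii) $B-\w\w^{\top}$ still lies in $\mathrm{S}_{+}(p-1,r_0)$ and is in general position. Given such a $\w$, the definition of $r_0=\rb(\G-v)$ yields $Q\in\mathrm{S}_{++}(p-1)$ with $Q\match{\G-v}B-\w\w^{\top}$, and then $P=\begin{pmatrix}1&\w^{\top}\\ \w&Q+\w\w^{\top}\end{pmatrix}$ is positive definite (its Schur complement is $Q$) and matches $A$ on all of $\G$ by (i) and transitivity \eqref{eq:trans}.

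The missing idea is the explicit choice of $\w$. Fix $\tau\subseteq V(\G-v)$ with $|\tau|=r_0-1$ and $\mathrm{ne}(v)\subseteq\tau$; this containment is exactly where $\deg_{\G}(v)\le r_0-1$ is consumed. Set $\x_{\tau}=A[\tau]^{-1}\u[\tau]$, let $\x\in\R^{p-1}$ have $\x[\tau]=\x_{\tau}$ and vanish off $\tau$, and put $\w=B\x$. Then $\w[\tau]=B[\tau]\x_{\tau}=A[\tau]\x_{\tau}=\u[\tau]$, which gives (i); and $\x^{\top}B\x=\u[\tau]^{\top}A[\tau]^{-1}\u[\tau]<1$ because $A[\{v\}\cup\tau]$ is an $r_0\times r_0$ principal submatrix of the rank-$r_0$, in general-position matrix $A$, hence positive definite. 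Remark \ref{rem:ext} (the constructive form of Lemma \ref{lem:ext}(a)) then delivers (ii). Your instinct that the degree hypothesis buys ``enough unconstrained directions'' for the first row is right, but without this concrete $\w$ the reassembly step you yourself flag as the main obstacle remains unproved.
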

\begin{proof}
If $\deg_{\G}=0$, then by Proposition \ref{prop:2} $\rb(\G)=\max\{1,\rb(\G-v)=\rb(\G-v)$. Thus we assume that $\deg_{\G}(v)\ge 1$. For convenience, let us assume that $v=1$ and $\mathrm{ne}(v)=\{2,\ldots, \deg_{\G}(v)+1\}$. Note that $V(\G-v)=\{2,\ldots,p\}$. Now we set 
\[
r_0=\rb(\G-v)\,, \:\: \tau=\{2,\ldots,r_0\} \:\; \text{and} \;\; \gamma=V(\G-v)\setminus \tau
\]
Note that $\mathrm{ne}(v)\subseteq \tau$ since $r_0\ge \deg_{\G}(v)+1$. Let $A\in \rm{S}_{+}(p,r_0)$. We need to show that there is a matrix $P\in \rm{S}_{++}(p)$ such that $P\match{\G}A$. For this, we assume that $A$ is a correlation matrix and therefore can be partitioned as
\[
A=
\begin{pmatrix}
1&\u^{\top}\\
\u&B
\end{pmatrix}=
\begin{pmatrix}
	1&\u[\tau]^{\top}&\u[\gamma]^{\top}\\
	\u[\tau]&A[\tau]&A[\tau,\gamma]\\
	\u[\gamma]&A[\gamma,\tau]&A[\gamma]
\end{pmatrix}
, \: \text{where} \: \; B=
\begin{pmatrix}
	A[\tau]&A[\tau,\gamma]\\
	A[\gamma,\tau]&A[\gamma]
\end{pmatrix}.
\]
Note also that $\u\in \R^{p-1}$ and $B\in \rm{S}_{+}(p-1,r_0)$ is in general position.  Let us set $\x_{\tau}=A[\tau]^{-1}\u[\tau]$. Now we have
\[
\begin{pmatrix}
	1&\u[\tau]^{\top}\\
	\u[\tau]&A[\tau]
\end{pmatrix}\succ 0 \;\, \text{and therefore} \;\, 1-\u[\tau]^{\top}A[\tau]^{-1}\u[\tau]>0.
\]
This implies that $\x_{\tau}^{\top}A[\tau]\x_{\tau}=\x_{\tau}^{\top}B[\tau]\x_{\tau}<1$. By Part (a) of Lemma \ref{lem:ext} and  Remark \ref{rem:ext}, if we set $\w=B\x$, then $B-\w\w^{\top}\in \rm{S}_{+}(p-1,r_0)$ is in general position. Therefore, there is a matrix $Q\in \rm{S}_{++}(p-1)$ such that $Q\match{\G-v} B-\w\w^{\top}$. Let us set
\[
P=\begin{pmatrix}
	1&\w^{\top}\\
	\w& Q+\w\w^{\top}
\end{pmatrix}\in \rm{S}_{++}(p).
\]
By Equation \eqref{eq:trans} it is clear that $Q+\w\w^{\top}\match{\G-v} B$. Also, since $\w[\tau]=A[\tau]\x_{\tau}=\u[\tau]$ and $\mathrm{ne}(v)\subseteq \tau$ we have $P_{1j}=A_{1j}$ whenever $j\in \mathrm{ne}(v)$. Therefore, $P\match{\G}A$.
\end{proof}
   We now prove that the upper bound $\rb(\G)\leq \delta^*(\G)+1$ holds.
\begin{proof} By mathematical induction let us assume that for any graph $\Hc$ with fewer than $p$ vertices $\rb(\Hc)\le \delta^*(\mathcal{H})+1$. Now let $\G$ be a graph with $p$ vertices. We assume without loss of generality that $p\ge2$. Let $v\in V$ such that $\textrm{deg}_{\G}(v)=\delta(G)$. On the contrary, let us assume that 
$\rb(G)\ge \delta^*(\G)+2$, then by Part (a) of Proposition \ref{prop:-v} we obtain
\[ \rb(\G-v)\ge \rb(G)-1\ge \delta^*(\G)+1\ge \textrm{deg}_{\G}(v)+1.
\]
 Lemma \ref{lem:key} now implies that $\rb(\G)=\rb(\G-v)$. The induction hypothesis then implies that $\rb(\G)=\rb(\G-v)\le \delta^*(\G-v)+1$. The fact that $\delta^*(\G)=\max\{\delta(\G), \delta^*(\G-v)\}$ \cite{M78} implies that $\rb(\G)\le \delta^*(\G)+1$. 
\end{proof}

\begin{Remark}
Note that Lemma \ref{lem:key} also holds for the weak Gaussian rank, that is,  for any graph $\G$ if $\ra(\G-v)\ge \deg_\G+1$, then $\ra(\G)=\ra(\G-v)$. Consequently, this implies $\ra(\G)\le \delta^{*}(\G)+1$. But the latter more easily follows from the fact that $\ra(\G)\le \rb(\G)\le \delta^{*}(\G)+1$.
\end{Remark}

\subsection{The lower bound: $\kappa^*(\G)+1\le \rb(\G)$} To prove that this lower bound holds we mainly rely on a theorem in \cite{L89}. This theorem associates the connectivity of a graph with its certain geometric representations. More details are as follows. Let $\G=(V,E)$ be graph. An orthonormal representation of $\G$ in $\R^d$ is a function $\phi:V\to \R^d$ assigning a unit vector $\u_i$ to each vertex $i\in V$ such that $\u_i^{\top} \u_j=0 $ whenever $ij\not\in E$.
In words, $\phi$ assigns to each vertex a unit vector in $\R^d$ such that the vectors assigned to nonadjacent vertices are orthogonal. Now a general-position orthonormal representation of $\G$ in $\R^d$ is an orthonormal representation $\phi$ in $\R^d$ such that the $p$ assigned vectors $\phi(1)=\u_1,\ldots, \phi(p)=\u_p$ are in general position in $\R^d$. The next theorem is crucial for proving the lower bound in Theorem \ref{thm:main}.
\begin{Theorem}[Lovasz et al. \cite{L89, L00}]\label{thm:Lovasz} If $\G=(V,E)$ is a graph with $p$ vertices, then the following are equivalent:
\begin{enumerate}
\item[\rm{(i)}] $\G$ is $(p -d )$-connected;
\item[\rm{(ii)}] $\G$ has a general-position orthonormal representation in $\R^d$.
\end{enumerate}
\end{Theorem}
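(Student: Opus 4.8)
The plan is to treat the two implications separately, starting with (ii)$\Rightarrow$(i), which is the easy direction. Suppose $\phi$ assigns unit vectors $\u_1,\dots,\u_p\in\R^d$ with $\u_i^{\top}\u_j=0$ whenever $ij\notin E$, the vectors being in general position. If $\G$ were not $(p-d)$-connected it would have a separating set of size at most $p-d-1$; since deleting further vertices can never reconnect what is left, we may pad this set up to a set $S$ with $|S|=p-d-1$ and $V\setminus S=A\sqcup B$ with $A,B\ne\emptyset$ and no edges between $A$ and $B$. Then $\mathrm{span}\{\u_i:i\in A\}$ and $\mathrm{span}\{\u_j:j\in B\}$ are orthogonal subspaces of $\R^d$, so the sum of their dimensions is at most $d$. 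But $|A|+|B|=|V\setminus S|=d+1$ with $|A|,|B|\ge 1$, so both are at most $d$, and general position forces the two dimensions to equal $|A|$ and $|B|$ --- a total of $d+1>d$, a contradiction. (The trivial case $d=0$ is dealt with directly, since then neither side can hold.)

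For (i)$\Rightarrow$(ii) I would induct on $p$ with $d$ fixed, building the representation one vertex at a time along an arbitrary ordering $v_1,\dots,v_p$ of $V$ and writing $\G_i=\G[\{v_1,\dots,v_i\}]$. Since removing a vertex drops vertex-connectivity by at most one, $\kappa(\G_i)\ge\kappa(\G)-(p-i)\ge i-d$, so $\G_i$ is $(i-d)$-connected and hence $\delta(\G_i)\ge\kappa(\G_i)\ge i-d$; in particular $v_i$ has at most $(i-1)-(i-d)=d-1$ non-neighbours among $v_1,\dots,v_{i-1}$. The base case $i\le d$ is trivial (take the first $i$ vectors of an orthonormal basis of $\R^d$: fewer than $d$ vectors are in general position vacuously, and a whole orthonormal basis is in general position). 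For the inductive step, given a general-position orthonormal representation $\u_1,\dots,\u_{i-1}$ of $\G_{i-1}$ in $\R^d$, let $N$ be the set of non-neighbours of $v_i$ in $\G_{i-1}$; by general position $U:=\mathrm{span}\{\u_j:v_j\in N\}$ has dimension $|N|\le d-1$, so $U^{\perp}$ has dimension at least $1$. We then need a unit vector $\u_i\in U^{\perp}$ lying outside $\mathrm{span}\{\u_t:v_t\in T\}$ for every $(d-1)$-subset $T$ of $\{v_1,\dots,v_{i-1}\}$ --- this is exactly the requirement that any $d$ of $\u_1,\dots,\u_i$ be linearly independent --- and since these are finitely many proper subspaces of $\R^d$, such a $\u_i$ exists as soon as none of them contains the whole of $U^{\perp}$.

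That last proviso is the main obstacle: for a carelessly chosen representation of $\G_{i-1}$ it can fail (say when $\dim U^{\perp}=1$ and that line happens to lie inside one of the bad $(d-1)$-dimensional spans), so the induction does not close with "general position" alone. The fix is to strengthen the inductive hypothesis, carrying along not merely a general-position representation of $\G_{i-1}$ but a \emph{generic} one --- a generic point of a fixed irreducible component $M_0(\G_{i-1})$ of the variety $M(\G_{i-1})\subseteq\R^{d\times(i-1)}$ cut out by the bilinear equations $\u_j^{\top}\u_k=0$ for $v_jv_k\notin E$ --- and then to show that for such a representation $U^{\perp}$ is contained in none of the bad spans. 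This is where the connectivity hypothesis genuinely enters: using Menger's theorem on $\G_{i-1}$ to produce enough vertex-disjoint paths, one can "transport" linear independence from a known independent $d$-set to the relevant one, certifying that each condition "$U^{\perp}\subseteq\mathrm{span}\{\u_t:v_t\in T\}$" defines a proper subvariety of the irreducible $M_0(\G_{i-1})$ and is therefore generically avoided; placing $\u_i$ generically in $U^{\perp}$ then realises the enlarged configuration as a generic point of a component $M_0(\G_i)$, closing the induction. (An alternative organisation is to prove, for each fixed $d$-subset $\alpha$ of $V$ separately, the purely existential statement that some orthonormal representation of $\G$ makes $\{\u_i:i\in\alpha\}$ independent --- again by a Menger/disjoint-paths construction --- and only at the end invoke irreducibility of $M_0(\G)$ to merge all $\binom{p}{d}$ of these into one general-position representation.) Either way the genericity/irreducibility argument, fed by Menger's theorem, is the crux, and everything around it is routine linear algebra.
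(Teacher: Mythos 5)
This theorem is not proved in the paper at all: it is quoted verbatim from Lov\'asz, Saks and Schrijver \cite{L89,L00} and used as a black box in the proof of the lower bound $\kappa^{*}(\G)+1\le \rb(\G)$, so there is no internal proof to compare yours against. Judged on its own merits, your argument for (ii)$\Rightarrow$(i) is correct and complete: padding a small cutset to one of size exactly $p-d-1$, observing that the vectors on the two sides of the separation span orthogonal subspaces of $\R^d$, and using general position to force those dimensions to equal $|A|$ and $|B|$ with $|A|+|B|=d+1>d$ is the standard argument and you execute it cleanly.

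The direction (i)$\Rightarrow$(ii) is where the entire content of the theorem lives, and there your text is a plan rather than a proof. You correctly diagnose why the naive vertex-by-vertex induction fails (the subspace $U^{\perp}$ may be swallowed by one of the bad $(d-1)$-dimensional spans) and you correctly name the known remedy (work with a generic point of an irreducible component of the representation variety and use connectivity, via Menger's theorem, to show each degeneracy condition cuts out a proper subvariety). But the sentence about ``transporting'' linear independence from a known independent $d$-set to an arbitrary one is precisely the step that carries all of the difficulty, and it is not carried out: you do not construct the vertex-disjoint paths, do not explain how exchanging vectors along a path stays within the chosen component, do not prove irreducibility of $M_0(\G)$, and do not verify that a generic extension of a generic point of $M_0(\G_{i-1})$ yields a generic point of $M_0(\G_i)$ --- this last issue being exactly where the original 1989 argument had the gap that necessitated the correction in \cite{L00}. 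As written, the proposal establishes only the easy implication; the hard one remains an outline with its crux missing.
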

We now proceed to prove that the lower bound $\kappa^{*}(\G)+1 \le \rb(\G)$ holds.
 \begin{proof} Since $\rb(\Hc)\le\rb(\G)$ for every subgraph $\Hc$ of $\G$, in light of Part (a) and Part (b) in Remark \ref{rem:star}, it suffices to show that for every graph $\G$ we have $\rb(\G)\ge \kappa(\G)+1$. First we set $k=\kappa(\G)$ and $d=p-k$. By Theorem \ref{thm:Lovasz} there are in general-position unit vectors $\u_1,\ldots,\u_p\in \R^d$ such that $\u_i^{\top}\u_j =0$ for each $ij\not\in E$. Let $\Omega=(\u_i^{\top}\u_j)_{1\le i,j\le p}$. By Lemma \ref{lem:equiv} we have $\Omega\in \rm{S}_{+}(p,d)$ is in general position. Let $\w_1,\ldots, \w_{k}\in \R^p$ be a basis of $\texttt{Null}(\Omega)$ and set
\[
A=
\left(
\begin{matrix}
\w_{1}&
\ldots&
\w_{k}
\end{matrix}
\right)
\left(
\begin{matrix}
\w_{1}^{\top}\\
\vdots\\
\w_{k}^{\top}
\end{matrix}
\right)\in \rm{S}_{+}(p,k).
\]
Corollary \ref{cor:cor} in Appendix B now implies that the matrix $A\in  \rm{S}_{+}(p,k)$ is in general position. Now let $P\in\rm{S}(p)$ such that $P\match{\G}A$. Then 
\[
\tr(P\Omega)=\sum_{ij\in E}P_{i,j}\Omega_{i,j}=\sum_{ij\in E}A_{i,j}\Omega_{i,j}=\tr(A \Omega)=0\quad\text{(note that $A\Omega=0$).}
\]
This shows that $P\notin \PD{p} $, since $\PD{p}$ is a self dual cone, that is,
\[
\rm{S}_{++}(p)=\{Q\in\rm{S}(p): \: \tr(Q B)>0 \:\: \text{for every} \: B\in  \rm{S}_{+}(p)\setminus\{ 0\} \},
\]
and $\Omega\in \rm{S}_{+}(p)\setminus \{0\}$. Thus there is no matrix $P\in\rm{S}_{++}(p)$ such that $P\match{\G} A$. This shows that $\rb(\G)\geq \kappa(\G)+1$.
\end{proof}

\section{Some applications of  Theorem \ref{thm:main}}\label{sec:app}
A useful application of the bounds given by Theorem \ref{thm:main} is that when the lower and upper bounds are equal the guassian rank is exactly determined. In this section we briefly discuss for which graphs $\kappa^{*}(\G)=\delta^*(\G)$. We also use Theorem \ref{thm:main} to obtain a sharp numerical upper bound for the guassian ranks of the so-called planar graphs. 
\subsection{Gaussian ranks of symmetric graphs and random graphs}
Let $\G=(V,E)$ be a graph. A permutation $\sigma: V\to V$ is said to be an automorphism of $\G$ if $ij\in E\implies \sigma(i)\sigma(j)\in E$. The graph $\G$ is said to be symmetric if for any two edges $ij$ and $i'j'\in E$ there is an automorphism $\sigma$ of $\G$ such that $\sigma(i)=i'$ and $\sigma(j)=j'$ (please see Chapter 27 in \cite{Gr95} for a detailed discussion of symmetric graphs). For example, one can check that the graphs given by Figure \ref{fig:aa} and Figure \ref{fig:bb} are symmetric. The symmetric property in particular implies that $\G$ is regular, that is, there is a positive integer $k$ such that $\deg_{\G}(v)=k$ for every vertex $v\in V$. An interesting feature of a symmetric graph $\G$ is that $\kappa(\G)=\delta(\G)$. This consequently implies that $\kappa^*(\G)=\delta^*(\G)$. Theorem \ref{thm:main} therefore implies that for a symmetric graph the guassian rank is exactly determined as
 \begin{equation}\label{eq:equality}
\rb(\G)=\kappa^*(\G)=\delta^*(\G).
\end{equation}
 Note that Equation \ref{eq:equality} can hold for many non-symmetric graphs as well, such as the regular graph given by Figure \ref{fig:cc} or even non-regular graphs such as grids. To the best our knowledge the class of all graphs $\G$ satisfying $\kappa^{*}(\G)=\delta^{*}(\G)$, or even $\kappa(\G)=\delta(\G)$, is not fully characterized. In the context of random graphs these graph parameters are identical almost surely for all random graphs. To be precise, a random graph $\G(\epsilon)$ is a graph in which the edges are selected by a sequence of (independent) Bernoulli trials with probability $0< \epsilon<1$ \cite{Gr95}.

\begin{Theorem}[Bollob\'{a}s et al.]
Almost surely for every random graph $\G(\epsilon)$ we have $\kappa(\G(\epsilon))=\delta(\G(\epsilon))$.
\end{Theorem}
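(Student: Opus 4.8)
The statement means that for each fixed $\epsilon\in(0,1)$ one has $\Pr\big[\kappa(\G(\epsilon))=\delta(\G(\epsilon))\big]\to 1$ as the vertex number $n\to\infty$. Since $\kappa(\Hc)\le\delta(\Hc)$ for every graph $\Hc$, it suffices to show that almost surely $\G(\epsilon)$ has no vertex cut of size smaller than $\delta:=\delta(\G(\epsilon))$, and I would prove this by a union bound over potential small cuts. The first step is to record the structure such a cut imposes. If $S$ is a cut with $s:=|S|<\delta$, then $V\setminus S$ splits into nonempty parts $A,B$ with no $A$--$B$ edges; set $a:=|A|\le|B|$, so $a\le(n-s)/2$. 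The minimum-degree condition gives more than the mere absence of crossing edges: every $v\in A$ has all of its neighbours in $A\cup S$, hence at least $\delta-(a-1)$ of them inside $S$; in particular $a\ge 2$ (a single vertex of $A$ would have degree $\le s<\delta$) and $s+a\ge\delta+1$, so $\delta+1-a\le s\le\delta-1$.

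Next I would pin down $\delta$ itself: since $\deg(v)\sim\mathrm{Bin}(n-1,\epsilon)$, a Chernoff bound together with a union bound over the $n$ vertices shows that almost surely $(1-\eta)\epsilon n\le\delta\le\epsilon n$ for any fixed $\eta>0$; I condition on this event and treat $\delta$ as a parameter in $[\tfrac12\epsilon n,\epsilon n]$. Now fix an admissible pair $(A,S)$. The events ``there are no $A$--$B$ edges'' and ``every $v\in A$ has at least $\delta-a+1$ neighbours in $S$'' concern disjoint sets of potential edges, so they are independent, and their probabilities are $(1-\epsilon)^{a|B|}$ and at most $\Pr\!\big[\mathrm{Bin}(s,\epsilon)\ge\delta-a+1\big]^{a}$. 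Multiplying by the number $\binom{n}{a}\binom{n-a}{s}$ of admissible pairs and summing over $a\ge 2$ and $\delta+1-a\le s\le\delta-1$, it remains to prove this sum is $o(1)$.

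That last estimate is the real work and the principal obstacle. A bound that exploits only the absence of crossing edges fails: when $a$ is a bounded constant the separator $S$ can still be chosen in $\binom{n}{s}=2^{(H(\epsilon)+o(1))n}$ ways (with $H$ the binary entropy), whereas $(1-\epsilon)^{a|B|}\approx(1-\epsilon)^{a(1-\epsilon)n}$ need not beat $2^{-H(\epsilon)n}$ when $\epsilon$ is small. The remedy is to use both constraints at once: in this regime $|B|=(1+o(1))(1-\epsilon)n$ and $\delta-a+1=(1+o(1))\epsilon n$, so the joint probability is at most $\big[(1-\epsilon)^{(1-\epsilon)n}\,\epsilon^{\,\epsilon n}\big]^{a}\cdot\mathrm{poly}(n)=2^{-aH(\epsilon)n+o(n)}$, which for $a\ge 2$ defeats the $2^{(H(\epsilon)+o(1))n}$ choices of separator with exponential room to spare; the sum over $a$ is then a convergent geometric-type series and the $O(n)$ values of $s$ cost only a polynomial factor. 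For $a$ unbounded but $a=o(n)$ the factor $(1-\epsilon)^{a|B|}=2^{-\omega(n)}$ already overwhelms the $2^{O(n)}$ configurations, and once $a$ is linear in $n$ both $A$ and $B$ are linear, so $(1-\epsilon)^{a|B|}=2^{-\Omega(n^{2})}$ crushes the at-most-$4^{n}$ configurations. Combining the three ranges with the almost-sure bound $\delta=(1+o(1))\epsilon n$ shows that almost surely $\G(\epsilon)$ has no cut of size below $\delta$, i.e.\ $\kappa(\G(\epsilon))\ge\delta(\G(\epsilon))$, which with the trivial reverse inequality completes the proof.
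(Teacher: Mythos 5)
The paper does not prove this theorem at all: it is imported from the random-graphs literature (the Bollob\'as--Thomason reference), so there is no internal proof to compare yours against. Your argument is, in substance, the classical proof of that result, and it is essentially correct. The two decisive ideas are both present and right: (i) a separator $S$ of size $s<\delta$ forces every vertex of the smaller side $A$ to have at least $\delta-a+1$ neighbours inside $S$, whence $a\ge 2$ and $s\ge\delta-a+1$; and (ii) for small $a$ the ``no crossing edges'' event alone cannot beat the $\binom{n}{s}\approx 2^{H(\epsilon)n}$ choices of $S$, so one must multiply in the (independent) degree-into-$S$ constraint, whose probability $\approx\epsilon^{\epsilon n}$ per vertex of $A$ combines with $(1-\epsilon)^{(1-\epsilon)n}$ to give $2^{-aH(\epsilon)n+o(n)}$, which wins for $a\ge 2$. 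Your diagnosis of why the naive union bound fails is exactly the right one.

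Two points of rigor deserve tightening, though neither is a genuine gap. First, you cannot literally ``condition on $\delta\in[(1-\eta)\epsilon n,\epsilon n]$ and treat $\delta$ as a parameter,'' since conditioning on the value of $\delta$ distorts the edge distribution; the standard fix is to bound the bad event by the union of (a) the event that some degree leaves $[(1-\eta)\epsilon n,(1+\eta)\epsilon n]$ and (b) the purely structural event that there exist disjoint $A,S$ with no $A$--$B$ edges, $|S|\le(1+\eta)\epsilon n$, and every $v\in A$ having at least $(1-\eta)\epsilon n-a+1$ neighbours in $S$ --- the latter no longer references the random quantity $\delta$ and is estimated exactly as you do. Second, the trichotomy ``$a$ bounded / $a$ unbounded but $o(n)$ / $a$ linear'' is not a legitimate case split at a fixed $n$; replace it by an explicit threshold $a_0(\epsilon)$ (large enough that $a(1-\epsilon)\log\frac{1}{1-\epsilon}$ exceeds, say, $2+H(\epsilon)$ for $a\ge a_0$), use the two-constraint estimate for $2\le a\le a_0$ and the crossing-edges estimate alone for $a>a_0$. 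Your estimates already cover both ranges, so this is only a matter of phrasing.
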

 \noindent The next result now follows from this theorem and Theorem \ref{thm:main}.
\begin{Corollary}
Almost surely for every random graph $\G(\epsilon)$ we have
\[\rb(\G(\epsilon))=\kappa^*(\G(\epsilon))=\delta^*(\G(\epsilon)).
\]
\end{Corollary}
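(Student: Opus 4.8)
The plan is to establish that, almost surely, $\kappa^{*}(\G(\epsilon))=\delta^{*}(\G(\epsilon))$; Theorem~\ref{thm:main} then determines $\rb(\G(\epsilon))$, exactly as the equality $\kappa(\G)=\delta(\G)$ forced \eqref{eq:equality} in the symmetric case. By \eqref{eq:kappa} we always have $\kappa^{*}(\G)\le\delta^{*}(\G)$, so everything reduces to the reverse inequality (a.s.). Choose an \emph{induced} subgraph $\Hc\subseteq\G(\epsilon)$ attaining $\delta(\Hc)=\delta^{*}(\G(\epsilon))=:d$ (one exists by finiteness, and can be taken induced, since passing to the subgraph induced on $V(\Hc)$ does not decrease any degree; in fact $\Hc$ may be taken to be the $d$-core). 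Since $\Hc\subseteq\G(\epsilon)$ we have $\kappa^{*}(\G(\epsilon))\ge\kappa(\Hc)$, so it suffices to show that almost surely $\kappa(\Hc)=\delta(\Hc)=d$, i.e.\ the densest subgraph of $\G(\epsilon)$ is $d$-connected. I stress that this does \emph{not} follow formally from the quoted fact $\kappa(\G(\epsilon))=\delta(\G(\epsilon))$ alone: a random graph a.s.\ contains induced subgraphs with $\kappa<\delta$, so one must use that $\Hc$ is maximal among subgraphs of minimum degree $\ge d$ (i.e.\ the $d$-core), of which there is a unique one for each $d$.

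For the core-connectivity step I would use only the crude fact that, for fixed $\epsilon$, a.s.\ $d=\delta^{*}(\G(\epsilon))=(1+o(1))\epsilon n$ (it lies between $\delta(\G(\epsilon))$ and $\Delta(\G(\epsilon))$, both $(1+o(1))\epsilon n$ by standard degree concentration). Suppose $\Hc$ has a separator $S$ with $|S|=s<d$, splitting $V(\Hc)\setminus S$ into non-empty $A,B$ with no $A$--$B$ edge. Since $\Hc$ is induced with minimum degree $\ge d$, every vertex of $A$ has all $\ge d$ of its neighbours inside $A\cup S$, hence at least $d-s$ of them inside $A$; likewise for $B$. Thus $|A|,|B|\ge d-s+1$. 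If $s\le d-cn$ for a small constant $c>0$, then $|A|,|B|\ge cn$, so $A$ and $B$ are two disjoint linear-sized sets with no edge between them in $\G(\epsilon)$; this fails a.s., since a fixed such pair has probability $(1-\epsilon)^{\Theta(n^{2})}$ and there are at most $3^{n}$ pairs. It remains to rule out a large separator $d-cn<s<d$, where $S$ is itself linear-sized; the key observation is that a vertex set $T$ with $|T|\le(1-\eta)n$ cannot contain $\ge d-s$ members each having $\ge d$ neighbours inside $T$ -- because then $\epsilon|T|$ would be bounded away from $d\approx\epsilon n$ -- which follows from a Chernoff bound for $\mathrm{Bin}(|T|-1,\epsilon)$ together with a union bound over $T$. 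Applying this with $T=A\cup S$ forces $|A\cup S|>(1-\eta)n$, hence $|A|>(1-\eta)n-s$, and since $s<d=(1+o(1))\epsilon n$ this again makes $|A|$ (and symmetrically $|B|$) linear, returning us to the forbidden configuration.

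The step I expect to be the main obstacle is exactly this large-separator case: when $s$ is close to $d$ the side $B$ may a priori be tiny, so the ``no edge between two large sets'' argument does not apply directly and must be replaced by the degree count sketched above; choosing $c$ and $\eta$ consistently and arranging the Chernoff exponents to beat the union bounds over $S$ and $T$ (both exponential in $n$, as these sets are linear-sized) is the genuine work, and is precisely the bookkeeping underlying the Bollob\'as-type theorem quoted in the paper. Alternatively, one may simply invoke a known result on the connectivity of $k$-cores of dense random graphs. Once $\kappa(\Hc)=d$ is in hand we get $\delta^{*}(\G(\epsilon))=d=\kappa(\Hc)\le\kappa^{*}(\G(\epsilon))\le\delta^{*}(\G(\epsilon))$ a.s., whence $\kappa^{*}(\G(\epsilon))=\delta^{*}(\G(\epsilon))$, and Theorem~\ref{thm:main} then gives the asserted identities, just as for \eqref{eq:equality}.
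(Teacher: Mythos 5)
Your overall strategy is both different from the paper's and, in one important respect, more honest. The paper disposes of this corollary in one line, deducing it from the Bollob\'as--Thomason statement $\kappa(\G(\epsilon))=\delta(\G(\epsilon))$ together with Theorem~\ref{thm:main}, in exact analogy with the symmetric case. But in the symmetric case the step from $\kappa=\delta$ to $\kappa^*=\delta^*$ uses regularity (for a $k$-regular graph every subgraph has minimum degree at most $k$, so $\delta^*=\delta$), and a dense random graph is not regular; in fact one can check that a.s.\ $\delta^*(\G(\epsilon))>\delta(\G(\epsilon))$, since the $k$-core for $k$ slightly above $\delta$ is nonempty. So your observation that the corollary does \emph{not} follow formally from $\kappa(\G(\epsilon))=\delta(\G(\epsilon))$ alone is correct and worth making explicit, and your reduction --- take $\Hc$ to be the $d$-core with $d=\delta^*(\G(\epsilon))$, and show $\kappa(\Hc)=d$, whence $\kappa^*\ge\kappa(\Hc)=d=\delta^*\ge\kappa^*$ --- is the right skeleton for an actual proof.

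The execution of the large-separator case, however, has a genuine gap rather than mere unfinished bookkeeping. Your ``key observation'' --- that a set $T$ with $|T|\le(1-\eta)n$ cannot contain $\ge d-s$ vertices each having $\ge d$ neighbours inside $T$ --- is false as stated when $d-s$ is small: for $s=d-1$ it asserts that \emph{no} vertex has $\ge d$ neighbours in any such $T$, yet $T=\{v\}\cup\mathrm{ne}(v)$ for a maximum-degree vertex $v$ has $|T|\approx\epsilon n$ and contains a vertex with $\Delta(\G(\epsilon))\ge\delta^*(\G(\epsilon))=d$ neighbours in $T$. Relatedly, the proposed Chernoff-plus-union-bound cannot close: the deviation probability for a single vertex is only $\exp(-\Theta(\epsilon\eta^2 n))$, which does not beat the $2^n$ sets $T$ (and the $n$ choices of offending vertex) for constant $\epsilon<1$. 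To rescue the argument one must use more of the structure of the configuration --- e.g.\ that the small side $B$ is nonempty and each of its vertices has $\ge d-s$ neighbours inside $B$ and $\ge d-|B|$ inside $S$, which is where the real Bollob\'as--Thomason-type counting lives --- or else explicitly invoke a known theorem that the $k$-core of $\G(\epsilon)$ is a.s.\ $k$-connected (\'Luczak's result on $k$-cores would do). Either repair is legitimate, but as written the step you flag as ``the main obstacle'' is not merely delicate; the stated lemma it rests on is untrue.
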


\begin{figure}[htbp]
\centering
\subfigure[]{
\includegraphics[scale=.06]{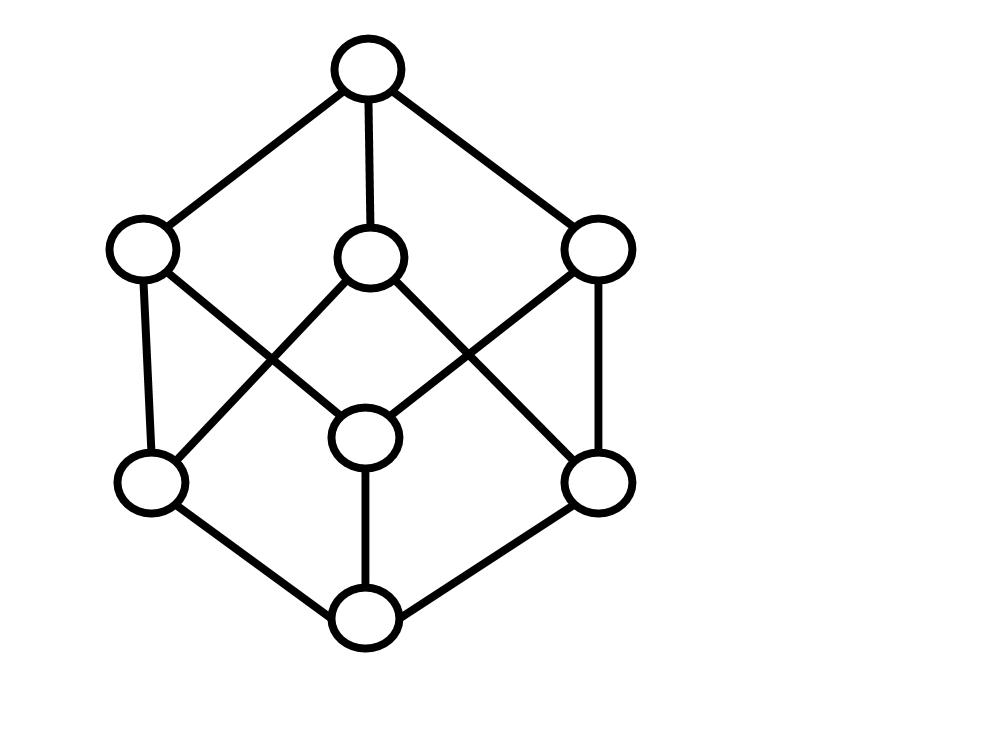}
\label{fig:aa}
}
\hspace{2.cm}
\subfigure[]{
\includegraphics[scale=.07]{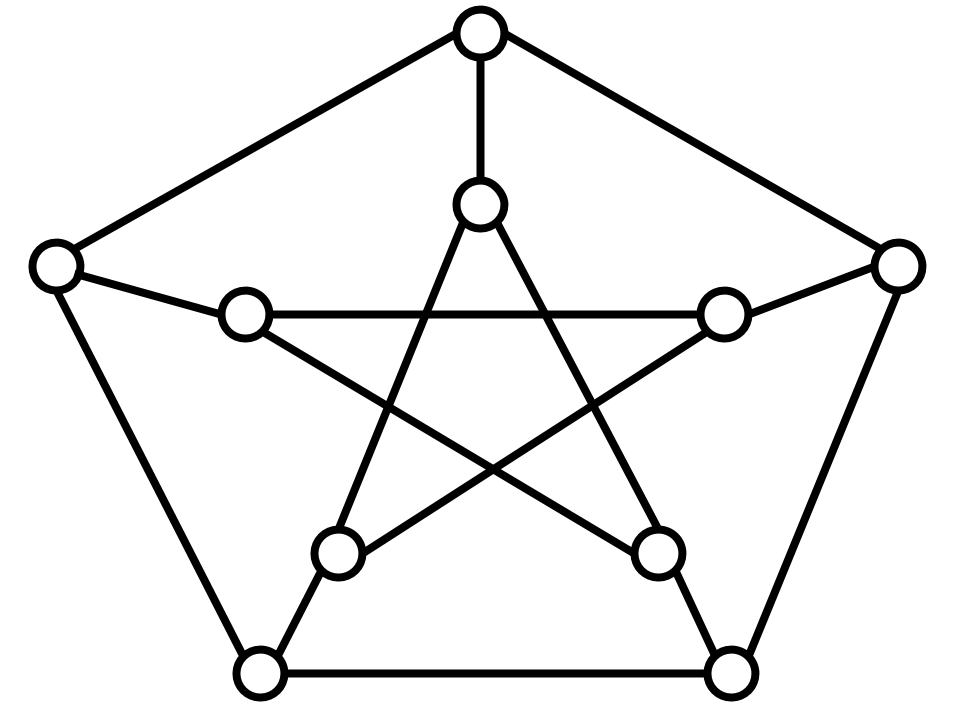}
\label{fig:bb}
}
\hspace{2.cm}
\subfigure[]{
\includegraphics[scale=.06]{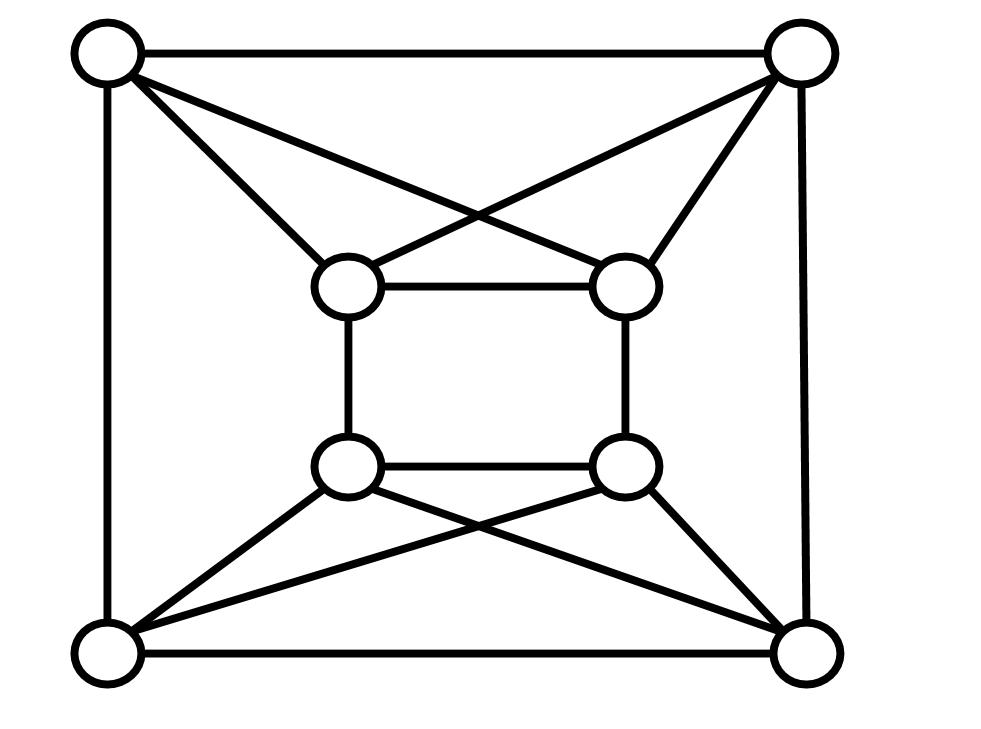}
\label{fig:cc}
}
\caption{Denoted graphs in (a) and (b) are symmetric. The graph given by  (c) is not symmetric. In each graph the equality in Equation \eqref{eq:equality} holds and the Gaussian ranks are easily computed to be  $4$, $4$ and $5$, respectively.}
\end{figure}
\subsection{On the Gaussian ranks of planar graphs} A planar graph is a graph that can be drawn in a plane without the edges crossing each other. In graph theory it is well-known that planar graphs are at most $5$-degenerate, that is, $\delta^*(\G)\le 5$ if $\G$ is planar. Theorem \ref{thm:main} therefore implies that when $\G$ is a planar graph $\rb(\G)\le 6$. Note that the upper bound $6$ is tight. For this, consider the planar graph $\G$ in  Figure \ref{fig:deg}. Since $\kappa^*(\G)=\delta^*(\G)=5$ by Equation \eqref{eq:equality} we have $\rb(\G)=6$.
\begin{figure}[htbp]
	\centering
		\includegraphics[scale=.08]{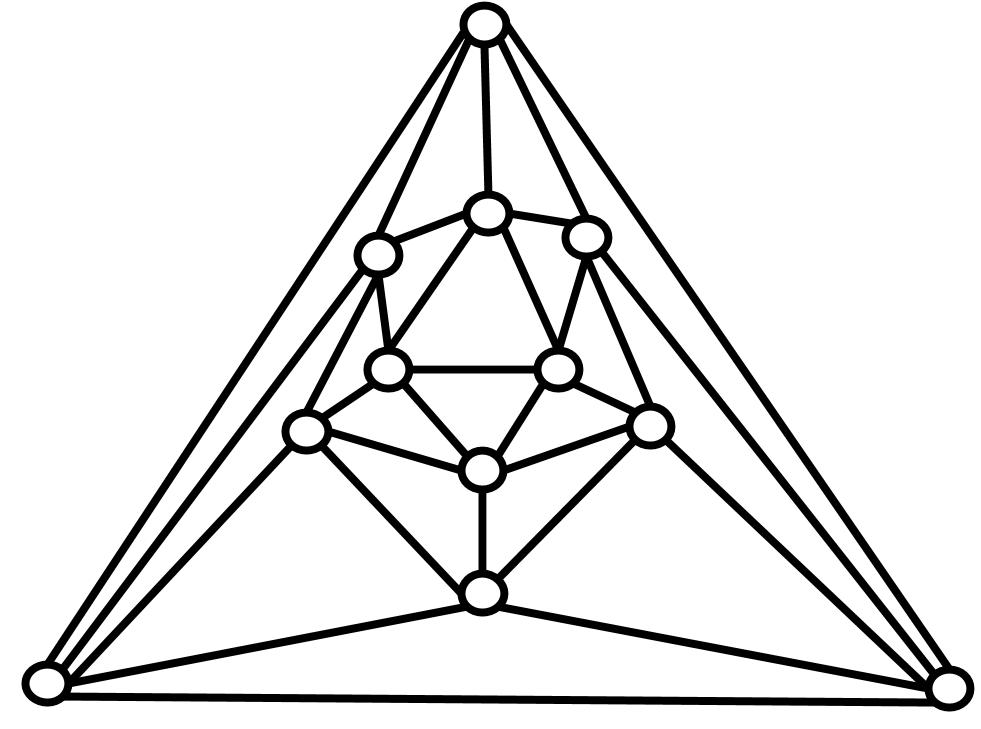}
	\caption{A planar graph with degeneracy number $5$.}
	\label{fig:deg}
\end{figure}


\section*{Appendix A}
Let $\G=(V,E)$ be a graph.
\begin{itemize}
 \item[(a)] A tree-decomposition of $\G$ is a pair $D=( S, T)$  with  $S=\{X_i :\: i \in I\}$  a collection of subsets of $V$ and  $T = (I, F)$  a tree, with one vertex for each subset of $S$, such that the following three conditions are satisfied:
\begin{enumerate}
\item  $\bigcup_{i\in I} X_i= V$,
\item  for all edges $(v,w)\in E$ there is a subset $X_i \in S$ such that both $v$ and $w$ are contained in $X_i$,
\item  for each vertex $u$ the set of vertices  $\{i : \: u\in X_i\} $ forms a sub tree of $T$.
\end{enumerate}
\item[(b)]The width of the tree-decomposition $\left(S, T\right)$ is $\mathrm{tw}(S,T)= \max\{|X_i|- 1: \: i\in I\}$.
\item[(c)] The treewidth of $\G$ is $\mathrm{tw}(\G)= \min\left\{ \mathrm{tw}(S,T): \: (S,T) \:\text{is a tree-decomposition of  $\G$}\right\}
$.
\end{itemize}

\section*{Appendix B}
\begin{Lemma}\label{lem:tr}
Suppose $A, B\in \rm{S}_{+}(p)$. Then $AB=0$ if and only if $\tr(AB)=0$.
\end{Lemma}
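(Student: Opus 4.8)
The plan is to separate the two implications, the first of which is trivial and the second of which is where positive semidefiniteness of \emph{both} factors does the work. If $AB=0$ then $\tr(AB)=\tr(0)=0$, so the content of the lemma is the converse (which indeed fails for indefinite matrices, e.g. $A=\mathrm{Diag}(1,-1)$, $B$ a suitable rotation). So I would assume $A,B\in\rm{S}_{+}(p)$ with $\tr(AB)=0$ and aim to conclude $AB=0$.

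For the converse I would use the positive semidefinite square roots. Since $A\succeq 0$ and $B\succeq 0$, there are symmetric matrices $A^{1/2},B^{1/2}\succeq 0$ with $A=A^{1/2}A^{1/2}$ and $B=B^{1/2}B^{1/2}$. Set $C=A^{1/2}B^{1/2}$; then $C^{\top}=B^{1/2}A^{1/2}$ because the square roots are symmetric, and the cyclic invariance of the trace gives
\[
\tr(AB)=\tr\!\big(A^{1/2}A^{1/2}B^{1/2}B^{1/2}\big)=\tr\!\big(B^{1/2}A^{1/2}\cdot A^{1/2}B^{1/2}\big)=\tr\!\big(C^{\top}C\big)=\|C\|_{F}^{2},
\]
the last quantity being the sum of the squares of the entries of $C$. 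Hence $\tr(AB)=0$ forces $C=A^{1/2}B^{1/2}=0$, and transposing also $B^{1/2}A^{1/2}=0$. Then
\[
AB=A^{1/2}\big(A^{1/2}B^{1/2}\big)B^{1/2}=A^{1/2}\cdot 0\cdot B^{1/2}=0 ,
\]
which finishes the argument.

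There is essentially no hard step here; the only point requiring a standard fact is the existence of the PSD square root. If one prefers to avoid it, the same conclusion follows by diagonalizing $A=\sum_i\lambda_i\u_i\u_i^{\top}$ with $\lambda_i>0$ and $\{\u_i\}$ orthonormal: then $\tr(AB)=\sum_i\lambda_i\,\u_i^{\top}B\u_i$ is a sum of nonnegative numbers (each $\u_i^{\top}B\u_i\ge 0$ since $B\succeq 0$), so $\tr(AB)=0$ forces $\u_i^{\top}B\u_i=0$ for every $i$, hence $B\u_i=0$ (again because $B\succeq 0$), and therefore $AB=\sum_i\lambda_i\,\u_i(B\u_i)^{\top}=0$. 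I would present the square-root computation as the main line, as it is the shortest, and perhaps remark that this is precisely the fact underlying the self-duality argument for $\rm{S}_{++}(p)$ used in the proof of the lower bound.
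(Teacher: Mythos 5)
Your proof is correct and follows essentially the same route as the paper: both reduce $\tr(AB)$ to the squared Frobenius norm of $A^{1/2}B^{1/2}$ via the positive semidefinite square roots and cyclic invariance of the trace. (Your version is in fact slightly more careful, since the paper omits the square on $\|A^{1/2}B^{1/2}\|_{F}$ and the final step recovering $AB=0$ from $A^{1/2}B^{1/2}=0$, both of which you supply.)
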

\begin{proof}
This easily follows from the fact that $\tr(AB)=\tr\left(A^{1/2}B^{1/2}A^{1/2}B^{1/2}\right)=\|A^{1/2}B^{1/2}\|_{F}$.
\end{proof}

\begin{Proposition}\label{prop:genker}
Let $A\in \rm{S}_{+}(p,d)$. Suppose $A[\alpha]$ is non-singular, for some $\alpha=\{i_1,\ldots, i_d\}\subseteq [p]$. Let us set $k=p-d$ and $\beta=V\setminus \alpha$. Then,
\[
\text{if  $B\in \rm{S}_{+}(p,k)$ is such that $AB=0 \implies B[\beta]$ is non-singular.}
\]
\end{Proposition}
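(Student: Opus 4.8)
The plan is to convert the algebraic hypothesis $AB=0$ into a statement about ranges and null spaces, and then to extract a contradiction from a putative singular vector of $B[\beta]$ by means of the Guttman rank additivity formula applied to the Schur complement $A[\beta\,|\,\alpha]$.

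First I would record the subspace picture. Since $A,B\in \rm{S}_{+}(p)$ and $AB=0$, every column of $B$ lies in $\texttt{Null}(A)$, so $\texttt{Range}(B)\subseteq\texttt{Null}(A)$; equivalently this also follows from Lemma \ref{lem:tr}. Now count dimensions: $\dim\texttt{Range}(B)=\rk(B)=k=p-d=\dim\texttt{Null}(A)$, so in fact $\texttt{Range}(B)=\texttt{Null}(A)$. Because $A$ and $B$ are symmetric, passing to orthogonal complements gives $\texttt{Null}(B)=\texttt{Range}(B)^{\perp}=\texttt{Null}(A)^{\perp}=\texttt{Range}(A)$.

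Next I argue by contradiction. Suppose $B[\beta]$ is singular and pick a non-zero $\y\in\R^{\beta}$ with $B[\beta]\y=0$. Let $\tilde{\y}\in\R^{p}$ equal $\y$ on $\beta$ and $0$ on $\alpha$. Then $\tilde{\y}^{\top}B\tilde{\y}=\y^{\top}B[\beta]\y=0$, and since $B\succeq 0$ this forces $B\tilde{\y}=0$, i.e. $\tilde{\y}\in\texttt{Null}(B)=\texttt{Range}(A)$. Write $\tilde{\y}=A\w$ for some $\w\in\R^{p}$. Reading the $\alpha$-block, $0=\tilde{\y}[\alpha]=A[\alpha]\w[\alpha]+A[\alpha,\beta]\w[\beta]$, so $\w[\alpha]=-A[\alpha]^{-1}A[\alpha,\beta]\w[\beta]$ (using that $A[\alpha]$ is non-singular by hypothesis). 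Substituting into the $\beta$-block,
\[
\y=\tilde{\y}[\beta]=A[\beta,\alpha]\w[\alpha]+A[\beta]\w[\beta]=\bigl(A[\beta]-A[\beta,\alpha]A[\alpha]^{-1}A[\alpha,\beta]\bigr)\w[\beta]=A[\beta\,|\,\alpha]\,\w[\beta].
\]
Finally I would invoke the Guttman rank additivity formula from \cite{Z5}: since $A[\alpha]$ is non-singular, $\rk(A)=\rk(A[\alpha])+\rk(A[\beta\,|\,\alpha])$, that is $d=d+\rk(A[\beta\,|\,\alpha])$, so $A[\beta\,|\,\alpha]=0$. The displayed identity then gives $\y=0$, contradicting the choice of $\y$, and hence $B[\beta]$ is non-singular.

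The only place requiring care — the ``hard part'', such as it is — is the dimension count that upgrades the inclusion $\texttt{Range}(B)\subseteq\texttt{Null}(A)$ to an equality (this is exactly where the hypothesis $\rk(B)=k$ is used) together with the ensuing passage to orthogonal complements; everything after that is a routine Schur-complement computation.
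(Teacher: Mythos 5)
Your proof is correct, and it takes a genuinely different route from the paper's. The paper first uses the Guttman formula to write $A$ as a congruence transform of $\begin{pmatrix} A[\alpha] & 0\\ 0 & 0\end{pmatrix}$, then passes $B$ through the same congruence to get a matrix $C$ with $C[\beta]=B[\beta]$, and uses the fact that $\tr(AB)=0$ forces $AB=0$ for positive semidefinite matrices (Lemma \ref{lem:tr}) to kill the blocks $C[\alpha]$ and $C[\alpha,\beta]$, so that $\rk(B[\beta])=\rk(C)=\rk(B)=k$. You instead work at the level of subspaces: the dimension count upgrading $\texttt{Range}(B)\subseteq\texttt{Null}(A)$ to equality, hence $\texttt{Null}(B)=\texttt{Range}(A)$, followed by a contradiction in which a putative singular vector of $B[\beta]$, extended by zero, is pulled back through $A$ and annihilated by the vanishing Schur complement $A[\beta\,|\,\alpha]$. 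Both arguments ultimately rest on the same two ingredients --- the Guttman identity giving $A[\beta\,|\,\alpha]=0$, and the implication $\x^{\top}B\x=0\Rightarrow B\x=0$ for $B\succeq 0$ (your step with $\tilde{\y}$ is exactly the rank-one instance of the paper's Lemma \ref{lem:tr}) --- but your version avoids the congruence bookkeeping and the trace manipulations entirely, which makes it shorter and arguably more transparent; the paper's version has the mild advantage of being a direct computation rather than an argument by contradiction, and of exhibiting explicitly that $B$ is ``supported on the $\beta$-block'' in the coordinates adapted to $A$.
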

\begin{proof}
First note that $A[\alpha]$ is non-singular, thus by the Guttman rank additivity formula we have $\rk(A)=\rk(A[\alpha])+\rk(A[\beta| \alpha])$. Therefore, $A[\beta| \alpha]=0$ and $A$ can be partitioned as
\[
A=\begin{pmatrix}
	I_d&0\\
	A[\beta,\alpha]A[\alpha]^{-1}&I_k
\end{pmatrix}
\begin{pmatrix}
	A[\alpha]&0\\
	0&0
\end{pmatrix}
\begin{pmatrix}
	I_d&A[\alpha]^{-1}A[\alpha,\beta]\\
	0&I_k
\end{pmatrix},
\]
where $I_d$ and $I_k$ are identity matrices in $\R^{d\times d}$ and $\R^{k\times k}$. Note now that
\begin{align*}
\tr(AB)&=\tr\left(\begin{pmatrix}
	I_d&0\\
	A[\beta,\alpha]A[\alpha]^{-1}&I_k
\end{pmatrix}
\begin{pmatrix}
	A[\alpha]&0\\
	0&0
\end{pmatrix}
\begin{pmatrix}
	I_d&A[\alpha]^{-1}A[\alpha,\beta]\\
	0&I_k
\end{pmatrix}B  \right)\\
&=\tr\left(
\begin{pmatrix}
	A[\alpha]&0\\
	0&0
\end{pmatrix}
\begin{pmatrix}
	I_d&A[\alpha]^{-1}A[\alpha,\beta]\\
	0&I_b
\end{pmatrix}B \begin{pmatrix}
	I_d&0\\
	A[\beta,\alpha]A[\alpha]^{-1}&I_k
\end{pmatrix}
 \right)\\
&=\tr\left(\begin{pmatrix}
	A[\alpha]&0\\
	0&0
\end{pmatrix}
\begin{pmatrix}
	C[\alpha]&C[\alpha,\beta]\\
	C[\beta,\alpha]&C[\beta]
\end{pmatrix}
\right)=0,
\end{align*}
where
\[
C=\begin{pmatrix}
	C[\alpha]&C[\alpha,\beta]\\
	C[\beta,\alpha]&C[\beta]
\end{pmatrix}=
\begin{pmatrix}
	I_d&A[\alpha]^{-1}A[\alpha,\beta]\\
	0&I_b
\end{pmatrix}B \begin{pmatrix}
	I[\alpha]&0\\
	A[\beta,\alpha]A[\alpha]^{-1}&I_k
\end{pmatrix}\in \rm{S}_{+}(p).
\]
By Lemma \ref{lem:tr} this implies that
\[
\begin{pmatrix}
	A[\alpha]&0\\
	0&0
\end{pmatrix}
\begin{pmatrix}
	C[\alpha]&C[\alpha,\beta]\\
	C[\beta,\alpha]&C[\beta]
\end{pmatrix}=
\begin{pmatrix}
	A[\alpha]C[\alpha]&A[\alpha]C[\alpha,\beta]\\
	0&0
\end{pmatrix}=
\begin{pmatrix}
	0&0\\
	0&0
\end{pmatrix}.
\]
Therefore, $C[\alpha]=0$, $C[\alpha,\beta]=0$ and $C[\beta,\alpha]=C[\alpha,\beta]^{\top}=0$. Thus
\[
k=\rk(B)=\rk(C)=\rk(C[\beta])=\rk(B[\beta]). \qquad \text{(Note that  $C[\beta]=B[\beta]$.)}
\]
This shows that $B[\beta]$ is non-singular.
\end{proof}
\begin{Corollary}\label{cor:cor}
Suppose $A\in \rm{S}_{+}(p,d)$ is in general position. If $B\in \rm{S}_{+}(p, p-d)$ such that $AB=0$, then $B$ is also in general position.
\end{Corollary}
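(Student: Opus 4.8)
The plan is to reduce Corollary \ref{cor:cor} to Proposition \ref{prop:genker} by exploiting the "in general position" hypothesis on $A$. Recall that $A \in \rm{S}_+(p,d)$ being in general position means every $d\times d$ principal submatrix $A[\alpha]$, with $|\alpha|=d$, is non-singular. We are given $B \in \rm{S}_+(p, p-d)$ with $AB = 0$, and we must show that every $(p-d)\times(p-d)$ principal submatrix of $B$ is non-singular. Fix any set $\beta \subseteq V$ with $|\beta| = p-d$, and set $\alpha = V \setminus \beta$, so $|\alpha| = d$. The key observation is that this is exactly the situation of Proposition \ref{prop:genker}: since $A$ is in general position, $A[\alpha]$ is non-singular, and $B \in \rm{S}_+(p, p-d)$ satisfies $AB = 0$; therefore Proposition \ref{prop:genker} applies verbatim and yields that $B[\beta]$ is non-singular.

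Since $\beta$ was an arbitrary subset of $V$ of size $p - d$, this shows that every $(p-d)\times(p-d)$ principal submatrix of $B$ is non-singular, which is precisely the statement that $B \in \rm{S}_+(p, p-d)$ is in general position. So the corollary follows immediately.

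The only subtlety to check is the bookkeeping of ranks and complements: one should verify that $\rk(B) = p - d$ is consistent with $AB = 0$ in the first place (it is automatic here since $B$ is assumed to lie in $\rm{S}_+(p,p-d)$, and $\mathrm{Range}(B) \subseteq \mathrm{Null}(A)$ which has dimension $p - d$, so in fact $\mathrm{Range}(B) = \mathrm{Null}(A)$ exactly). There is no genuine obstacle here; essentially all the work has already been done in Proposition \ref{prop:genker} and Lemma \ref{lem:tr}, and the corollary is just the "for all $\beta$" packaging of that proposition under the general-position hypothesis. If anything, the mild point worth a sentence is noting that the hypothesis $|\beta| = p-d$ forces $|\alpha| = d$, which is the exact size at which the general-position property of $A$ guarantees non-singularity of $A[\alpha]$ — the two codimensions match precisely, which is what makes the reduction clean.
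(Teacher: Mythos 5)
Your proposal is correct and matches the paper's own proof essentially verbatim: fix an arbitrary $\beta$ of size $p-d$, let $\alpha$ be its complement (so $|\alpha|=d$ and $A[\alpha]$ is non-singular by the general-position hypothesis), and apply Proposition \ref{prop:genker} to conclude $B[\beta]$ is non-singular. The extra remark about $\mathrm{Range}(B)=\mathrm{Null}(A)$ is harmless but not needed.
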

\begin{proof}
Set $k=p-d$ and let $\beta=\{i_1,\ldots,i_k\}\subseteq[p]$. If we set $\alpha=[p]\setminus \beta$, then $A[\alpha]$ is non-singular. Therefore, by Proposition \ref{prop:genker} $B[\beta]$ is non-singular.
\end{proof}

\begin{thebibliography}{9}

%
%
%
%
\bibitem{Bo85}
\textsc{Bollob\'{a}s, B. \and Thomason, A.}
Random graphs of small orders.
\textit{Random Graphs ``83 Based on lectures presented at the 1st Pozn\'{a}n Seminar on Random Graphs"}.
North-Holland, 1985.


\bibitem{Buhl93}
\textsc{Buhl S. L.}
On the existence of maximum likelihood estimators for graphical Gaussian models.
\textit{Scandinavian Journal of Statistics,}
\textbf{20} (1993) 263--270.


\bibitem{D8}
\textsc{Dahl, Joachim and Vandenberghe, Lieven and Roychowdhury, Vwani}
Covariance selection for nonchordal graphs via chordal embedding.
\textit{Optimization Methods and Software.}
\textbf{23} (2008) {501--520}.

\bibitem{D72}
\textsc{Dempster, A. P.}
Covariance selection.
\textit{ Biometrics}
\textbf{28} (1972) 157--175.

\bibitem{Diestel10}
\textsc{Diestel, R.}
\textit{Graph theory: Graduate Texts in Mathematics.}
Springer--Verlag, Heidelberg, 2010.

\bibitem{Gr95}
\textsc{Graham, R. L., Gr\"{o}tschel, M. and Lov\'{a}sz, L.}
 \textit{Handbook of Combinatorics II}
 MIT Press, Cambridge, MA, USA, 1995.

\bibitem{G84}
\textsc{Grone R., Johnson C. R., Sa E. M. and Wolkowicz H.}
 Positive  definite completions of partial Hermitian matrices.
\textit{Linear  Algebra and Its Applications.}
\textbf{58} (1984) 109--124.


\bibitem{H10}
\textsc{Hastie, Trevor, Tibshirani, Robert \and Friedman, Jerome.}
 \textit{The elements of statistical learning. Data mining, inference, and prediction.}
Springer Series in Statistics. Springer-Verlag, New York, 2001.

\bibitem{HO10}
\textsc{Householder, Alston.}
{\textit The theory of matrices in numerical analysis.}
Blaisdell, New York, 196 Springer-Verlag, 2010.

\bibitem{K94}
\textsc{Kloks, Ton.}
\textit{Treewidth. Computations and Approximations, volume 842 of Lecture Notes in Computer Science.}
Springer Verlag, Berlin, 1994.


\bibitem{L96}
\textsc{ Lauritzen, Steffen.}
 \textit{Graphical models.}
 Oxford University Press, 1996.

\bibitem{L89}
 \textsc{Lov\'asz, L., Saks, M. , \and Schrijver, A.}
 Orthogonal representations and connectivity of graphs.
\textit{Linear Algebra Applications.}
 \textbf{114-115} (1989) 439–-454.

\bibitem{L00}
 \textsc{Lov\'asz, L., Saks, M. , \and Schrijver, A.}
A correction: orthogonal representations and connectivity of graphs.
\textit{Linear Algebra Applications.}
\textbf{313} (2000) 101–-105.

\bibitem{M78}
\textsc{Matula, David W.}
"Subgraph connectivity numbers of a graph" in Theory and Applications of Graphs.
\textit{Lecture Notes in Mathematics.}
 Springer--Verlag, Berlin Heidelberg New York.
\textbf{642} (1978) 371--383.

\bibitem{S86}
\textsc{Speed, T.P., \and Kiiveri, H.}
Gaussian Markov distribution over fnite graphs.
 \emph{Annals of Statistics.}
\textbf{14} (1986) 138–-150.

\bibitem{Uh12}
\textsc{Uhler, Caroline.}
Geometry of maximum likelihood estimation in Gaussian graphical models.
\textit{The Annals of Statistics.}
\textbf{40} (2012) 238--261.

\bibitem{Z5}
\textsc{Zhang, Fuzhen.}
\textit{The Schur Complement and Its Applications: Numerical Methods and Algorithms.}
Springer, Dordrecht, 2005.

\end{thebibliography}
\end{document}